\newtheorem{theorem}{Theorem}[section]
\newtheorem{proposition}[theorem]{Proposition}
\newtheorem{lemma}[theorem]{Lemma}
\newtheorem{remark}[theorem]{Remark}
\newtheorem{alg}[theorem]{Algorithm}
\numberwithin{equation}{section}
\renewcommand{\epsilon}{\varepsilon}
\newcommand{\bigo}{\mathcal{O}}
\newcommand{\C}{\mathbb{C}}
\newcommand{\e}{\mathrm{e}}
\newcommand{\dd}{\mathrm{ds}}
\title{
Superconvergence of the Strang splitting when using the Crank-Nicolson scheme for parabolic PDEs with Dirichlet and oblique boundary conditions
}
\author{ 
Guillaume Bertoli\textsuperscript{1}, Christophe Besse\textsuperscript{2}, and Gilles Vilmart\textsuperscript{1}
}
\begin{document}
\footnotetext[1]{
Universit\'e de Gen\`eve, Section de math\'ematiques, UNI DUFOUR, 24, rue du Général Dufour, Case postale 64, 1211 Gen\`eve 4, Switzerland, Guillaume.Bertoli@unige.ch, gilles.vilmart@unige.ch}
\footnotetext[2]{Institut de Mathématiques de Toulouse, U.M.R CNRS 5219, Universit\'e de Toulouse, CNRS, UPS IMT, 118 Route de Narbonne, 31062 Toulouse Cedex 9, France, christophe.besse@math.univ-toulouse.fr}
\maketitle

\begin{abstract}
We show that the Strang splitting method applied to a diffusion-reaction equation with inhomogeneous general oblique boundary conditions is of order two when the diffusion equation is solved with the Crank-Nicolson method, while order reduction occurs in general if using other Runge-Kutta schemes or even the exact flow itself for the diffusion part. We prove these results when the source term only depends on the space variable, an assumption which makes the splitting scheme equivalent to the Crank-Nicolson method itself applied to the whole problem. Numerical experiments suggest that the second order convergence persists with general nonlinearities.

\medskip

\noindent
\textbf{Key words.} Strang splitting, Crank-Nicolson, diffusion-reaction equation, nonhomogeneous boundary conditions, order reduction

\medskip

\noindent
\textbf{AMS subject classifications.} 65M12, 65L04

\end{abstract}

\section{Introduction}
We consider a parabolic semilinear differential problem, in a smooth bounded domain $\Omega$ in $\mathbb{R}^d$ in dimension $d\geq 1$, for $t\in [0,T]$, of the form 
\begin{align}\label{eq:parabolic}
\partial_t u(x,t) &= Du(x,t) + f(x,u(x,t))\ \ \mathrm{in}\ \Omega\times (0, T],\nonumber\\
 \quad Bu(x,t) &= b(x)\ \ \mathrm{on}\ \partial\Omega\times (0,T], \nonumber\\ 
 u(x,0)&=u_0(x)\quad \mathrm{in}\ \Omega,
\end{align}
where, for $1<p<\infty$, $D:W^{2,p}(\Omega)\rightarrow L^p(\Omega)$ is a linear diffusion operator and $f:L^p(\Omega)\rightarrow L^p(\Omega)$ is a possibly nonlinear source term. The operator $B$ represents boundary conditions of type Dirichlet, Neumann or Robin. When time-discretizing a problem of this form, it can be advantageous to use a splitting method in order to divide the problem~\eqref{eq:parabolic} into two parts:  
the source equation
\begin{align}\label{eq:source}
\partial_t u(x,t) = f(x,u(x,t))\quad \mathrm{in}\ \Omega\times (0,T],
\end{align}
 and the diffusion equation
\begin{align}\label{eq:diffusion}
\partial_t u(x,t) = D u(x,t)\quad \mathrm{in}\ \Omega\times (0,T], \qquad Bu(x,t) = b(x)\quad \mathrm{on}\ \partial\Omega\times (0,T].
\end{align}
We use respectively the notations $\phi_t^f$ and $\phi_t^D$ to denote the exact flows of the subproblems~\eqref{eq:source} and~\eqref{eq:diffusion}.
The advantage of this subdivision is that equations~\eqref{eq:source} and~\eqref{eq:diffusion} can often be solved more efficiently than the main problem~\eqref{eq:parabolic}. The classical splitting method used to approximate the main problem~\eqref{eq:parabolic} is the Strang splitting. Starting from an arbitrary initial datum $u_n$, one step of the Strang splitting method, with a time step $\tau>0$, applied to equation~\eqref{eq:parabolic} is given by
\begin{align}\label{eq:StrangfDf}
u_{n+1}=\phi_\frac{\tau}{2}^f\circ\phi_\tau^D\circ\phi_\frac{\tau}{2}^f(u_n).
\end{align}
Interchanging the roles of the diffusion part and nonlinear part, it is also possible to define one step of the Strang splitting method as
\begin{align}\label{eq:StrangDfD}
u_{n+1}=\phi_\frac{\tau}{2}^D\circ\phi_\tau^f\circ\phi_\frac{\tau}{2}^D(u_n).
\end{align}
In both cases, we start the procedure with the initial condition $u_0$ of the parabolic problem~\eqref{eq:parabolic}.
Both methods~\eqref{eq:StrangfDf} and~\eqref{eq:StrangDfD} are formally of order of accuracy two. However, a reduction of order occurs in general, particularly for inhomogeneous boundary conditions, has observed in~\cite{Hun03} and~\cite{Hun95}. A suitable correction of the splitting algorithm has been proposed in~\cite{Ost16}, to avoid order reduction phenomenon. In~\cite{Ber20}, an alternate correction was proposed that depends only on the flow $\phi^f_{\frac{\tau}{2}}$ and facilitates the calculation of the correction. In this paper, we will however not use the techniques developed in~\cite{Ost16},~\cite{Ber20}. We prove that
when the Crank-Nicolson scheme is used to solve the diffusion equation~\eqref{eq:diffusion} in the splitting~\eqref{eq:StrangfDf}, there is no reduction of order away from a neighbourhood of $t=0$. This superconvergence property appears specific to the Crank-Nicolson scheme and when another Runge-Kutta method or even the exact flow itself is used to approximate the diffusion subproblem, the order reduction of the splitting~\eqref{eq:StrangfDf} is not avoided. We denote by $\phi_t^{D,CN}$ the numerical flow of the Crank-Nicolson method for the diffusion problem~\eqref{eq:diffusion}. We obtain the following splitting method, where $\phi^D_{\tau}$ has been replaced by $\phi_t^{D,CN}$ in the splitting~\eqref{eq:StrangfDf},
 \begin{align}\label{eq:StrangfDfcn}
u_{n+1}=\phi_\frac{\tau}{2}^f\circ\phi_\tau^{D,CN}\circ\phi_\frac{\tau}{2}^f(u_n).
\end{align}
We prove that the splitting~\eqref{eq:StrangfDfcn} has no order reduction when the nonlinearity $f=f(x)$ only depends on the space variable. More precisely, we prove in this case the following exact representation of the error at time $t_n=n\tau$,
\begin{align}\label{eq:globalerror}
u_n-u(t_n)=\left(r(\tau A)^n-\e^{n\tau A}\right)A^{-1}(Du_0+f),
\end{align}
where $A$ is the restriction of the operator $D$ to $\mathcal{D}(A)=\{u\in W^{2,p}(\Omega)\ ;\ Bu = 0 \ \mbox{on}\ \partial\Omega \}$, the set of functions satisfying the homogeneous boundary condition $Bu(x)=0$ on the boundary $\partial\Omega$, and where $r(z)=(1+\frac{z}{2})/(1-\frac{z}{2})$ is the stability function of the Crank-Nicolson scheme. 

As seen in~\cite[Theorem 4.2 and Theorem 4.4]{Lar91}, for the simplified case where $f$ and $b$ are both zeros in~\eqref{eq:parabolic}, that is for $\partial_t u(x,t)=Au(x,t)$, second order convergence results for A-stable methods (see~\cite[Chapter IV.3]{Hai10}) usually require $u_0\in\mathcal{D}(A^2)$. In contrast, L-stable methods are second order convergent outside a neighbourhood of the origin even if $u_0\in L^p(\Omega)$. The Crank-Nicolson scheme, although it is not L-stable but only A-stable, is second order convergent outside a neighbourhood of the origin for $u_0\in \mathcal{D}(A)$ (see~\cite[Theorem 2.1]{Han99}). Maximal parabolic regularity of A-stable Runge-Kutta methods is studied in~\cite{Lub16}. To the best of our knowledge, there exists no result in the literature which proves already that the Crank-Nicolson scheme is second order convergent outside a neighbourhood of the origin when applied to a nonlinear parabolic problem with inhomogeneous boundary conditions and an initial condition $u_0\in\{u\in W^{2,p}(\Omega)\ ;\ Bu = b \ \mbox{on}\ \partial\Omega \}$.

This is a direct consequence of the convergence result of the splitting~\eqref{eq:StrangfDfcn} since for $f=f(x)$, the splitting with Crank-Nicolson~\eqref{eq:StrangfDfcn} is equal to the Crank-Nicolson scheme~\eqref{eq:CN} applied to the whole problem~\eqref{eq:parabolic}.

We also provide numerical experiments in the case where $f$ depends on the solution $u$, in which case the splitting~\eqref{eq:StrangfDfcn} is different from the Crank-Nicolson scheme, and observe that the splitting method~\eqref{eq:StrangfDfcn} remains second order convergent in this case. Note also that the superconvergence property does not hold for the other splitting methods given by $u_{n+1}=\phi_\frac{\tau}{2}^{D,CN}\circ\phi_\tau^f\circ\phi_\frac{\tau}{2}^{D,CN}(u_n)$ nor does this splitting preserves stationary states for $f=f(x)$.

This paper is organized as follows. In Section~\ref{Section:Framework}, we describe an appropriate analytical framework for the analysis, where $D$ is chosen to be a second order elliptic operator and $B$ a first order differential operator corresponding to Dirichlet or oblique boundary conditions. In Section~\ref{Section:Method}, we describe precisely the algorithm corresponding to the splitting~\eqref{eq:StrangfDfcn}. In Section~\ref{Section:Analysis}, we restrict ourselves to the case where $f$ does not depend on the solution $u$. We first provide an exact representation of the local error (Proposition~\ref{prop}). We then prove formula~\eqref{eq:globalerror} for the global error (Theorem~\ref{thm:fx}) and additionally conclude that the stationary states are preserved by the splitting method~\eqref{eq:StrangfDfcn} (Remark~\ref{Cor}). In Section~\ref{Sec:num}, we provide numerical experiments to illustrate the properties of the splitting method~\eqref{eq:StrangfDfcn} compared to several natural splitting methods in dimensions one and two for constant and nonlinear terms. 
\section{Analytical framework}\label{Section:Framework}
We follow closely the framework and the notations given in the book~\cite[Chapter 3]{Lun95}.
Let $\Omega$ be an open bounded subset of $\mathbb{R}^d$ with $C^2$ boundary $\partial\Omega$ and dimension $d\geq 1$.
For $1<p<\infty$, let $D:W^{2,p}(\Omega)\rightarrow L^p(\Omega)$ be a second order  differential operator,
$$
D=\sum_{i,j=1}^{d}\frac{\partial}{\partial x_i}\left(a_{ij}(x)\frac{\partial}{\partial x_j}\right)+\sum_{i=1}^d b_i(x)\frac{\partial}{\partial x_i},
$$
where $a_{ij}$ and $b_i$ are real continuous functions on $\overline{\Omega}$. We assume that the matrix $[a_{ij}(x)]_{ij}$  is symmetric and uniformly positive definite on $\overline{\Omega}$, i.e for all $x\in \overline{\Omega}$ and for all $\xi\in\mathbb{R}^d$, $\xi^T [a_{ij}(x)]_{ij}\xi\geq c\xi^T \xi$, where $c>0$ is independent of $x$. The source term $f:L^p(\Omega)\rightarrow L^p(\Omega)$ is assumed continuously differentiable and we assume that the initial conditions $u_0$ belongs to $W^{2,p}(\Omega)$.

 The linear operator $B:W^{2,p}(\Omega)\rightarrow W^{1,p}(\Omega)$ is either defined for all $u\in W^{2,p}(\Omega)$ as $Bu=u$, which corresponds to Dirichlet boundary conditions or it is a first order differential operator defined for all $u\in W^{2,p}(\Omega)$ as 
$$
Bu(x)=\sum_{i=1}^d \beta_i(x)\frac{\partial u(x)}{\partial x_i}+\alpha(x)u(x),
$$
where $\beta_i$ and $\alpha$ are uniformly continuous and differentiable on $\overline{\Omega}$, which corresponds to Robin boundary conditions. We assume that $\alpha$ is not zero everywhere on $\partial\Omega$. The degenerate case, where $\alpha$ is the zero function, corresponding to Neumann boundary conditions, is discussed in Remark~\ref{rem:Neumann} below. If $B$ is a first order operator, we assume that the uniform nontangentiality condition is satisfied for all $x\in\partial\Omega$, 
$$
\left|\sum_{i=1}^d\beta_i(x)\vec{n}_i(x)\right|\geq c,
$$
where $c>0$ is independent of $x$ and where $\vec{n}(x)$ is the outwardly normal unit vector.  On the boundary $\partial\Omega$, $Bu|_{\partial\Omega}$ is the trace of $Bu\in W^{1,p}(\Omega)$ on $\partial\Omega$ and is therefore an element of $L^p(\partial\Omega)$. To avoid heavy notations, we simply write $Bu=b$, on $\partial\Omega$.
We assume that $b$ is a twice continuously differentiable function on the boundary $\partial\Omega$. To avoid stiffness of the solution or boundary layers at the initial time $t=0$, we assume in addition that the initial condition $u_0\in W^{2,p}(\Omega)$ satisfies the boundary conditions $Bu_0(x)=b(x)$ on $\partial\Omega$. 

The space $\{u\in W^{2,p}(\Omega)\ ; \ Bu=b\ \mbox{on}\ \partial\Omega \}$ is difficult to handle since it is not a linear subspace of $L^p(\Omega)$ if $b$ is not the zero function on $\partial \Omega$. Therefore, we provide a reformulation of the problem~\eqref{eq:parabolic} with homogeneous boundary conditions. We choose a function $z\in W^{2,p}(\Omega)$ which satisfies the boundary conditions $Bz=b$ on $\partial\Omega$. Such a function always exists with the assumptions that we made on $B,b$ and $\partial\Omega$. We define the function $\tilde{u}=u-z$, which satisfies the following differential problem with homogeneous boundary conditions,
\begin{align}\label{eq:parabolicZD}
\partial_t \tilde{u}(x,t) &= D\tilde{u}(x,t) + f(x,\tilde{u}(x,t)+z(x))+ Dz(x)\ \ \mathrm{in}\ \Omega\times (0,T],\nonumber\\
 \quad B\tilde{u}(x,t) &= 0\ \ \mathrm{on}\ \partial\Omega\times (0,T], \nonumber\\ 
 \tilde{u}(x,0)&=u_0(x)-z(x)\quad \mathrm{in}\ \Omega.
\end{align}
 We define the operator $(A,\mathcal{D}(A))$ as the restriction of the operator D to the domain $\mathcal{D}(A)=\{u\in W^{2,p}\ ;\ Bu=0\ \text{on } \partial \Omega\}$, i.e. $Au=Du$ for all 
$u\in \mathcal{D}(A)$. The operator $A$ therefore includes the homogeneous boundary conditions in its domain. Under the above assumptions, the operator $A$ is a closed densely defined linear operator satisfying the two following properties (see~\cite[Theorem 3.1.13]{Lun95} and~\cite[page 92]{Tho06}):
\begin{enumerate}
\item The resolvent set of $A$, $\rho(A)=\{\lambda\in\mathbb{C}\ ;\ \lambda I-A  \text{ is an isomorphism}\}$, contains the closure of the set $\Sigma_{\theta}=\{z\in \mathbb{C}\ ;\ z\neq 0,\ |\mathrm{arg}(z)|<\pi-\theta\}$, where $\theta\in (0,\frac{\pi}{2})$ is fixed,
\begin{equation}\label{Hyp:1}
\rho(A)\supset \overline{\Sigma}_{\theta}.
\end{equation}
\item For all $\lambda\in \Sigma_{\theta}$, the resolvent of $A$, $R(\lambda, A)=(\lambda I-A)^{-1}$, satisfies the following bound for the operator norm,
\begin{equation}\label{Hyp:2}
\|R(\lambda,A)\|\leq \frac{M}{|\lambda|},
\end{equation}
where $M\geq 1$.
\end{enumerate}
Note that, since $0\in\rho(A)$ by~\eqref{Hyp:1}, the operator $A$ is invertible and $A^{-1}$ is bounded. The operator $A$ is therefore the infinitesimal generator of an analytic uniformly bounded semigroup denoted $\e^{tA}$, given by
\begin{align*}
\e^{t A}=\frac{1}{2\pi i}\int_\Gamma \e^{zt}R(z,A)\mathrm{dz},
\end{align*}
where $\Gamma $ is the boundary of $\Sigma_{\theta}$ with imaginary part increasing along $\Gamma$ (see~\cite[Theorems 2.5.2 and 1.7.7]{Paz83}).

Since the homogeneous boundary conditions are included in the domain of $A$, we have the following reformulation of the problem~\eqref{eq:parabolicZD},
\begin{equation}\label{eq:parabolicZ}
 \partial_t\tilde{u}(t) = A\tilde{u}(t)+f(\tilde{u}(t)+z)+Dz,\quad \mbox{for}\quad t\in(0,T], \qquad \tilde{u}(0)=u_0-z,
\end{equation}
where we omit the variable $x$ in the notations, i.e. $\tilde{u}(t)$ denotes $\tilde{u}(x,t)$ and similarly for $z,u_0$, and $f$.
This equation has a solution $\tilde{u}\in C^1([0,T],L^p(\Omega))\cap C([0,T],\mathcal{D}(A))$ if $T$ is sufficiently small (see \cite[Proposition 7.1.10]{Lun95}), given by Duhamel formula,
\begin{align*}
\tilde{u}(t)&=\e^{t A}(u_0-z)+\int_0^t \e^{(t-s)A}(f(\tilde{u}(s)+z)+Dz)\dd.
\end{align*}
For $\tau>0$, denoting $t_n=n \tau$, since $\tilde{u}(t_n)\in \mathcal{D}(A)$, we have
\begin{align}\label{eq:exactSol}
u(t_{n}+\tau)&=z+\e^{\tau A}(u(t_n)-z)+\int_0^\tau \e^{(\tau-s)A}(f(u(t_n+s))+Dz)\dd.
\end{align}
Note that the above reformulation corresponds to the usual lifting
methodology to handle inhomogeneous boundary conditions.
A more general Banach space framework, that includes e.g. a bi-Laplacian
diffusion problem, will be discussed in Remark~\ref{rem:abstract}.

\begin{remark}\label{rem:Neumann}
Alternatively, on can consider pure Neumann boundary conditions, which corresponds to 
$$
B=\sum_{i=1}^d \beta_i(x)\frac{\partial}{\partial x_i}.
$$
In this particular case, we consider the operator
$$
D=\sum_{i,j=1}^{d}\frac{\partial}{\partial x_i}\left(a_{ij}(x)\frac{\partial}{\partial x_j}\right)+\sum_{i=1}^d b_i(x)\frac{\partial}{\partial x_i}+c(x),
$$
where $c(x)<c_{max}<0$ with $c(x)$ uniformly continuous. If all the additional assumptions on $D$ and $B$ are satisfied, then the operator $A$, defined as above, satisfied the assumptions~\eqref{Hyp:1} and~\eqref{Hyp:2} as required. Alternatively, if $c$ is the null function, we can also consider the subspace of functions with zero average on $\Omega$.
\end{remark}
\begin{remark}
Although we choose to present Theorem~\ref{thm:fx} in $L^p(\Omega)$ to simplify the presentation, it remains true for general complex separable Banach spaces and suitable analytic semigroups. In this case, the hypotheses on $A$ and $f$ are described in Section~\ref{Section:Analysis}, Remark~\ref{rem:abstract}.
\end{remark}
\section{The splitting method based on the Crank-Nicolson scheme}\label{Section:Method}
We describe precisely the algorithm for the splitting~\eqref{eq:StrangfDfcn} with a time step $\tau>0$. The same time step $\tau$ is used for the Strang splitting~\eqref{eq:StrangfDf} and for the Crank-Nicolson method used to approximate~\eqref{eq:diffusion}.
One step of the Crank-Nicolson scheme with a time step $\tau$ and an initial condition $u_0$ is given by the solution $u_1$ of the following equation,
\begin{equation}\label{eq:CN}
\frac{u_1(x)-u_0(x)}{\tau}=D\frac{u_1(x)+u_0(x)}{2}\quad \mbox{in } \Omega,\quad B\frac{u_0(x)+u_1(x)}{2}=b(x)\ \ \text{on }\ \partial\Omega.
\end{equation}
We denote by $u_1=\phi^{D,CN}_{\tau}(u_0)$ the solution of the problem~$\eqref{eq:CN}$.
Numerically, as explained in~\cite{Bes09} (see also Remark~\ref{rem:CNPrecision}), it is advantageous to save a linear system resolution and to define $v_1=\frac{u_1+u_0}{2}$ and to first find the solution $v_1$ of
\begin{equation}\label{eq:CNv}
2\frac{v_1(x)-u_0(x)}{\tau}=Dv_1(x)\quad \mbox{in } \Omega,\quad\quad Bv_1(x)=b(x)\ \ \text{on }\ \partial\Omega.
\end{equation}
One step of the Crank-Nicolson method is then given by
\begin{equation}\label{eq:2vu}
u_1=2v_1-u_0.
\end{equation}
One step of the splitting method~\eqref{eq:StrangfDfcn} is therefore given by the following algorithm.
\begin{alg}[Main algorithm for the splitting~\eqref{eq:StrangfDfcn}]\label{alg}
\
\begin{enumerate}
\item Given $u_n(x)$, compute  the solution $w(x,\frac{\tau}{2})$ of $\partial_t w(x,t) = f(x,w(x,t))$ in $\Omega$, $w(x,0)=u_n(x)$.
\item Compute the solution $v(x)$ of $(I-\frac{\tau}{2}D)v(x)=w(\frac{\tau}{2})$ in $\Omega$ with $ Bv(x)=b(x)$ on $\partial \Omega$. Compute $\hat{v}(x)=2v(x)-w(x,\frac{\tau}{2}).$
\item Compute  the solution $\hat{w}(x,\frac{\tau}{2})$ of $\partial_t \hat{w}(x,t) = f(x,\hat{w}(x,t))$ in $\Omega$, $\hat{w}(x,0)=\hat{v}(x)$.\\ Define $u_{n+1}(x)=\hat{w}(x,\frac{\tau}{2})$.
\end{enumerate}
\end{alg}
Note that one step of the Algorithm~\ref{alg} is given by the solution $u_{n+1}$ of the following problem,
\begin{align}
&\frac{\phi^f_{-\frac{\tau}{2}}(u_{n+1})-\phi^f_{\frac{\tau}{2}}(u_n)}{\tau}=D\frac{\phi^f_{-\frac{\tau}{2}}(u_{n+1})+\phi^f_{\frac{\tau}{2}}(u_n)}{2}\quad \mbox{in }\Omega,\nonumber\\
& B\frac{\phi^f_{-\frac{\tau}{2}}(u_{n+1})+\phi^f_{\frac{\tau}{2}}(u_n)}{2}=b\quad\mbox{on }\partial\Omega,\label{eq:Alg}
\end{align}
which can be solved using a linear solver for computing $\hat v =
\phi^f_{-\tau/2}(u_{n+1})$, then $u_{n+1}=\phi_{\tau/2}(\hat v)$, combined
for instance with a finite element discretization for the spatial
discretization.
\begin{remark}
Similarly to~\cite{Ost16}, the auxiliary function $z$ is only used as a tool to introduce homogeneous boundary conditions in the analysis. It is never used throughout the algorithm~\ref{alg} as seen in~\eqref{eq:Alg}.
\end{remark}
\section{Convergence analysis for a solution independent source term}\label{Section:Analysis}
In this section, we give an estimate of the error of the splitting~\eqref{eq:StrangfDfcn} when the source term $f=f(x)$ only depends on the space variable $x$. We assume $f\in L^p(\Omega)$, where again $1<p<\infty$. We can write the parabolic problem~\eqref{eq:parabolic} as
\begin{align}\label{eq:parabolicfx}
\partial_t u(x,t) &= Du(x,t) + f(x)\ \ \mathrm{in}\ \Omega\times (0, \infty),\nonumber\\
 \quad Bu(x,t) &= b(x)\ \ \mathrm{on}\ \partial\Omega\times (0,\infty), \nonumber\\ 
 u(x,0)&=u_0(x)\quad \mathrm{in}\ \Omega,
\end{align}
Since we are interested in the semi-discretization in time, for brevity of notations, we write, from now on, $u(t)$ and $f$ instead of $u(x,t)$ and $f(x)$.
We denote by $r(y)$ the stability function of the Crank-Nicolson method given by, 
\begin{equation}\label{eq:rationalCN}
r(y)=\frac{1+\frac{y}{2}}{1-\frac{y}{2}}.
\end{equation}
We recall that, for any Runge-Kutta method applied with a time step $\tau>0$ to the Dahlquist scalar test equation (see~\cite[Definition IV.2.1]{Hai10})
$$\frac{dx}{dt}(t)=\lambda x(t),\quad x(0)=x_0,$$ with $\lambda\in \mathbb{C}$, one obtains the induction $x_{n+1}=R(h\lambda)x_n$, where $R:\mathbb{C}\rightarrow \mathbb{C}$ is a rational approximation of the exponential that we call the stability function of the Runge-Kutta method. For a fixed degree of the numerator and denominator, the rational approximations that have the highest order of approximation are called the Padé approximations of the exponential (cf. \cite[Chapter IV.3]{Hai10}) and efficient Runge-Kutta methods are typically constructed to have a stability function equal to a Padé approximation. The approximation $r(z)$ that corresponds to the Crank-Nicolson stability function is the (1,1)-Padé approximation~\eqref{eq:rationalCN}.   
An A-stable method is by definition a Runge-Kutta method whose stability function verifies $|R(y)|\leq 1$ for all $y\in \mathbb{C}^-=\{z\in \mathbb{C}\ ;\ \Re(z)\leq 0\}$. We recall that the only Padé approximations that verify this property are the $(j,k)$-Padé approximations with $k\leq j\leq k+2$ (see~\cite[Theorem 4.12]{Hai10}).
\begin{remark}\label{rem:CNonly}
Consider the following ordinary differential equation 
$$\frac{d x}{dt}(t)=\lambda x(t)+b,\quad x(0)=x_0,$$
with $\lambda,b\in\mathbb{C}$.
 A Runge-Kutta method with stability function $R(y)$ yields
\begin{equation}\label{eq:RK}
x_{n+1}=R(h\lambda)x_n+\frac{R(h\lambda)-1}{h\lambda}b.
\end{equation}
The Strang splitting $x_{n+1}=\phi_\frac{h}{2}^b\circ\phi_h^{\lambda,RK}\circ\phi_\frac{h}{2}^b(x_n),$
where $\phi_\frac{h}{2}^b(x)=x+\frac{h}{2}b$ and 
$\phi_h^{\lambda,RK}(x)=R(h\lambda)x$ yields 
\begin{equation}\label{eq:StrangRK}
x_{n+1}=R(h \lambda)(x_n+\frac{h}{2}b)+\frac{h}{2}b.
\end{equation}
Equations~\eqref{eq:RK} and~\eqref{eq:StrangRK} coincide if and only if 
$$
R(y)=r(y)=\frac{1+\frac{y}{2}}{1-\frac{y}{2}},
$$
which is the stability function~\eqref{eq:rationalCN} of the Crank-Nicolson scheme. Therefore, the only Runge-Kutta methods for which equations~\eqref{eq:RK} and~\eqref{eq:StrangRK} coincide are the ones with a stability function equal to $r(y)$.
\end{remark}
\begin{remark}\label{rem:CNequiv}
One step of the Crank-Nicolson scheme applied to the parabolic problem~\eqref{eq:parabolic} is given by the solution $u_{n+1}$ of the following equation, 
\begin{align}
&\frac{u_{n+1}-u_n}{\tau}=D\frac{u_{n+1}+u_n}{2}+\frac{f(u_{n+1})+f(u_n)}{2}\quad \mbox{in }\Omega,\nonumber\\
& B\frac{u_{n+1}+u_n}{2}=b\quad\mbox{on }\partial\Omega.\label{eq:CNfu}
\end{align}
We observe that, for $f=f(x)$, formula~\eqref{eq:CNfu} is equal to formula~\eqref{eq:Alg} which describes one step of the splitting~\eqref{eq:StrangfDfcn}. Therefore, we deduce that for $f=f(x)$, the Strang splitting with Crank-Nicolson~\eqref{eq:StrangfDfcn} is equivalent to the Crank-Nicolson scheme itself applied to the whole problem~\eqref{eq:parabolicfx}.
\end{remark}
\subsection{Main results}
The main result of this paper is the following theorem, which states that the splitting~\eqref{eq:StrangfDfcn} yields a method of order $2$ of accuracy away form a neighbourhood of the origin $t=0$ on unbounded intervals ($t>0$). In contrast, in a neighbourhood of zero, the order of accuracy reduces to one.
\begin{theorem}\label{thm:fx}
Let $e_n=u_n-u(t_n)$ be the global error of the splitting method~\eqref{eq:StrangfDfcn} applied to the parabolic problem~\eqref{eq:parabolicfx}, where $u_0\in W^{2,p}(\Omega)$ and satisfies $Bu_0=b$ on $\partial\Omega$.
Then, for all $n\geq 0$, the global error $e_n$ is given by
\begin{align}\label{eq:global}
e_n=\left(r(\tau A)^n-\e^{n\tau A}\right)A^{-1}(Du_0+f)
\end{align}
and it satisfies the bound
\begin{align*}
\|e_n\|_{L^p(\Omega)}\leq \frac{C\tau^2}{t_n},
\end{align*}
where $C$ is a constant independent of $\tau$, $n$, and $t_n=n\tau$. 
\end{theorem}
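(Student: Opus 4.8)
The plan is to exploit that, since $f=f(x)$ is independent of $u$, the source flow is the explicit affine map $\phi^f_{\tau/2}(u)=u+\frac{\tau}{2}f$, and that the Crank--Nicolson flow is likewise affine, so that one step of Algorithm~\ref{alg} is an affine map whose iteration can be solved in closed form. First I would rewrite one Crank--Nicolson step in terms of $A$ and the lift $z$ (with $Bz=b$). Writing $v_1-z\in\mathcal{D}(A)$ in~\eqref{eq:CNv} and using $u_1=2v_1-w$, a short computation gives
\[
\phi^{D,CN}_\tau(u)=u_\infty^D+r(\tau A)\,(u-u_\infty^D),\qquad u_\infty^D:=z-A^{-1}Dz,
\]
where $r$ is the stability function~\eqref{eq:rationalCN} and $u_\infty^D$ is the steady state of the pure diffusion problem; the key algebraic identities are $I+r(\tau A)=2(I-\frac{\tau}{2}A)^{-1}$ and $r(\tau A)-I=\tau A(I-\frac{\tau}{2}A)^{-1}$. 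This mirrors exactly the representation $\phi^D_\tau(u)=u_\infty^D+\e^{\tau A}(u-u_\infty^D)$ of the exact diffusion flow, with $\e^{\tau A}$ replaced by $r(\tau A)$, and is essentially the content of Proposition~\ref{prop}.

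Composing the three steps of Algorithm~\ref{alg} I obtain the affine recursion $u_{n+1}=r(\tau A)u_n+g$ with $g=(I-r(\tau A))u_\infty^D+\frac{\tau}{2}(I+r(\tau A))f$. Its fixed point $u^\ast=(I-r(\tau A))^{-1}g$ simplifies, using the two identities above (so that $(I-r(\tau A))^{-1}(I+r(\tau A))=-\frac{2}{\tau}A^{-1}$), to $u^\ast=u_\infty^D-A^{-1}f=z-A^{-1}(Dz+f)$, which is precisely the steady state $u_\infty$ of the full problem~\eqref{eq:parabolicfx}. Hence $u_n=u_\infty+r(\tau A)^n(u_0-u_\infty)$, while the Duhamel formula~\eqref{eq:exactSol} gives $u(t_n)=u_\infty+\e^{t_nA}(u_0-u_\infty)$. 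Subtracting and using that $Bu_0=b$ forces $u_0-z\in\mathcal{D}(A)$, whence $u_0-u_\infty=A^{-1}(Du_0-Dz)+A^{-1}(Dz+f)=A^{-1}(Du_0+f)$, yields the exact error formula~\eqref{eq:global}.

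It then remains to bound $e_n=(r(\tau A)^n-\e^{t_nA})A^{-1}v$ with $v=Du_0+f\in L^p(\Omega)$. I would represent it by the Dunford integral over the boundary $\Gamma$ of a sector $\Sigma$ around $\mathbb{R}^-$ of half-angle $\psi$ with $\theta<\psi<\frac{\pi}{2}$,
\[
e_n=\frac{1}{2\pi i}\int_\Gamma \frac{r(\tau\lambda)^n-\e^{t_n\lambda}}{\lambda}\,R(\lambda,A)v\,\mathrm{d}\lambda,
\]
and estimate with the resolvent bound~\eqref{Hyp:2}, reducing matters to the scalar integral $\int_\Gamma |k_n(\lambda)|\,|\lambda|^{-2}\,|\mathrm{d}\lambda|$ with $k_n(\lambda)=r(\tau\lambda)^n-\e^{n\tau\lambda}$. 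After the rescaling $\mu=\tau\lambda$ I would split into $|\mu|\le1$ and $|\mu|\ge1$. On $|\mu|\le1$ the order-two consistency $|r(\mu)-\e^\mu|\le C|\mu|^3$, a telescoping of $r^n-\e^{n\mu}$, and $\max(|r(\mu)|,|\e^\mu|)\le \e^{-c|\mu|}$ give $|k_n(\mu)|\le Cn|\mu|^3\e^{-cn|\mu|}$; on $|\mu|\ge1$ one uses $A$-stability in the sharper form $|r(\mu)|\le \e^{-c_1/|\mu|}$ together with $|\e^\mu|\le\e^{-c|\mu|}$. Both regimes integrate to a multiple of $\tau^2/t_n$, which gives the claimed bound after absorbing $\|Du_0+f\|_{L^p}$ into the constant.

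The main obstacle is precisely this last estimate. Because Crank--Nicolson is only $A$-stable and not strongly $A$-stable ($|r(\infty)|=1$), the scheme does not smooth, so $r(\tau A)^n$ enjoys no discrete parabolic regularization and a naive term-by-term telescoping of $r(\tau A)^n-\e^{t_nA}$ only yields order one. The delicate point is that in the low-frequency regime $|\mu|\lesssim1$ one must not estimate the two propagators $r(\tau\lambda)^n$ and $\e^{t_n\lambda}$ separately, since this loses a full power of $t_n$; instead one must retain the cancellation encoded in the consistency order, while the high-frequency contribution is controlled by the weak decay $|r(\mu)|\le\e^{-c_1/|\mu|}$. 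The factor $t_n^{-1}$, rather than the $t_n^{-2}$ of fully nonsmooth data, reflects that $A^{-1}(Du_0+f)$ carries exactly one order of smoothness, i.e. lies in $\mathcal{D}(A)$.
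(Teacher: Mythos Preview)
Your proof is correct, and the derivation of the exact error formula~\eqref{eq:global} takes a slightly different, arguably more transparent, route than the paper. Where the paper first computes the local error $\delta_{n+1}=(r(\tau A)-\e^{\tau A})A^{-1}\e^{t_nA}(Du_0+f)$ (Proposition~\ref{prop}) and then sums $e_n=\sum_{k=0}^{n-1}r(\tau A)^{n-k-1}\delta_{k+1}$ and telescopes, you recognise instead that both the numerical and the exact flows are affine iterations sharing the same fixed point $u_\infty=z-A^{-1}(Dz+f)$, so that $u_n-u_\infty=r(\tau A)^n(u_0-u_\infty)$ and $u(t_n)-u_\infty=\e^{t_nA}(u_0-u_\infty)$; subtraction gives~\eqref{eq:global} at once, and Corollary~\ref{Cor} drops out as a byproduct of the construction. (One small misattribution: the Crank--Nicolson flow representation you derive is formula~\eqref{eq:SolCN} in the paper, not Proposition~\ref{prop}; the latter is precisely the local-error identity that your fixed-point argument bypasses.)

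For the bound, the paper simply invokes Theorem~\ref{thm:hansbo}, whereas you sketch its proof directly via the Dunford integral. Your outline is correct and captures the two essential ingredients: the order-two consistency $|r(\mu)-\e^{\mu}|\le C|\mu|^3$ on $|\mu|\le1$, and the sectorial decay $|r(\mu)|\le\e^{-c/|\mu|}$ on $|\mu|\ge1$ (which the paper records as~\eqref{lemma : 3}). The paper's Appendix carries out essentially this argument but with a technical twist: because $r(\tau\lambda)^n$ alone does not decay at infinity, the Appendix first composes with one or two half-steps of implicit Euler $r_0(z/2)$ to force $O(|\lambda|^{-1})$ or $O(|\lambda|^{-2})$ decay (Proposition~\ref{prop:compositionCN}) and then controls the resulting remainders. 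In your route the factor $A^{-1}$ supplies the extra $1/\lambda$ directly, so the Dunford integral already converges absolutely and no such regularisation is needed; this is exactly why your direct estimate of $\int_\Gamma|k_n(\lambda)||\lambda|^{-2}|\mathrm{d}\lambda|$ goes through.
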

\begin{remark}
The estimate of Theorem~\ref{thm:fx} could be used to derive a fully discrete estimate of the form
$$
\|u_n^h-u(t_n)\| \leq C (\tau^2/t_n + h^p)
$$
where $u_n^h$ denotes a standard finite element discretization of order $p$ on a spatial mesh with size $h$. The idea of the proof is to rely on the triangle inequality
$$
\|u_n^h-u(t_n)\| \leq C \|u_n^h-u^h(t_n)\| + \|u^h(t_n)-u(t_n)\|
$$
where $u^h(t_n)$ denotes the semi-discretization in space at time $t_n$. Then, observe that the estimate of Theorem 4.1 also holds for $u_n^h-u^h(t_n)$ uniformly with respect to the spatial mesh size $h$, using that the space discretization of the diffusion operator A is a self-adjoint operator that satisfies assumptions analogous to~\eqref{Hyp:1} and~\eqref{Hyp:2}, see e.g.~\cite{CroPoly}.
\end{remark}
\begin{remark}\label{rem:abstract}
One can extend to more abstract problems the framework described in Section 2 and consider for example, a problem where $D$ is the bi-Laplacian with appropriate boundary conditions. This formulation also include Galerkin approximation of parabolic problems (see~\cite{Tho06}). More generally, for a complex separable Banach space $X$, we need $A:\mathcal{D}(A)\rightarrow X$, to be a closed densely defined linear operator satisfying the conditions~\eqref{Hyp:1} and~\eqref{Hyp:2} or equivalently we require $A$ to be the infinitesimal generator of a uniformly bounded analytic semigroup (~\cite[Theorems 2.5.2]{Paz83}). We require $f:X\rightarrow X$ to be continuously differentiable. The operator $D:\mathcal{D}(D)\rightarrow X$ is assumed to be an extension of $A$, i.e. $\mathcal{D}(A)\subset\mathcal{D}(D)$. We require $z\in\mathcal{D}(D)$  and $u_0-z\in \mathcal{D}(A)$. Then, the problem~\eqref{eq:parabolicZ} has a unique solution $\tilde{u}\in C^1([0,T],X)\cap C([0,T],\mathcal{D}(A))$ if $T$ is sufficiently small (see \cite[Proposition 7.1.10]{Lun95}). The main problem~\eqref{eq:parabolic}, for $u=\tilde{u}+z$ becomes
\begin{equation*}
 \partial_tu(t) = Du(t)+f(u(t)),\ \ \mbox{for}\ \ t\in(0,T],\quad u(t)-z\in\mathcal{D}(A)\ \ \mbox{for}\ \ t\in(0,T], \quad u(0)=u_0.
\end{equation*}
For this problem, the splitting~\eqref{eq:StrangfDfcn}, is defined as above for~\eqref{eq:parabolic}, where the boundary conditions of the problem~\eqref{eq:parabolic} have been replaced by $u(t)-z\in\mathcal{D}(A)$. Under all those conditions, for $f=f(x)$, the convergence analysis (Theorem~\ref{thm:fx}) remains true, i.e. the splitting method~\eqref{eq:StrangfDfcn} has no reduction of order away from the origin and the global error is given by formula~\eqref{eq:global}.
\end{remark}
\begin{remark}\label{Cor}
Since the splitting method~\eqref{eq:StrangfDfcn} applied to the parabolic problem~\eqref{eq:parabolicfx} is equivalent to the Crank-Nicolson scheme, it must preserve exactly the stationary states. Precisely, for $Du_0+f=0$, the error satisfies $e_n=0$. 
For general nonlinearities that depend on the solution $f=f(u)$, the stationary states are not preserved. This is not surprising since in~\cite{Mcl16}, it is proved that a method that is not a Butcher-series method, like the splitting methods, and which is invariant by an affine change of variable (precisely affine-equivalent), cannot preserve all stationary states. 
\end{remark}
\subsection{Preliminaries}
We give some basic properties of the rational approximation $r(y)$ in~\eqref{eq:rationalCN}. The following properties are obtained with straightforward computations.
\begin{lemma}\label{lemma:1}
We have the following formulas,
\begin{equation}\label{eq:rp1}
r(y)+1=\frac{2}{1-\frac{y}{2}},\qquad r(y)-1=\frac{y}{1-\frac{y}{2}},
\end{equation}
and 
\begin{align}\label{eq:rme}
 \left(r(y)-\e^{y}\right)&=\frac{y}{2}(r(y)+1)-(\e^{y}-1).
\end{align}
\end{lemma}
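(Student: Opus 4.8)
The three identities are elementary algebraic consequences of the definition $r(y)=(1+\frac{y}{2})/(1-\frac{y}{2})$, so the plan is simply direct computation, organized so that the third formula reuses the first two rather than re-expanding everything.

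For the first two identities, I would place $r(y)\pm 1$ over the common denominator $1-\frac{y}{2}$. Writing $r(y)+1=\frac{(1+\frac{y}{2})+(1-\frac{y}{2})}{1-\frac{y}{2}}$, the numerator collapses to $2$, giving $\frac{2}{1-\frac{y}{2}}$; similarly $r(y)-1=\frac{(1+\frac{y}{2})-(1-\frac{y}{2})}{1-\frac{y}{2}}=\frac{y}{1-\frac{y}{2}}$. Both manipulations are valid for every $y\neq 2$, which is all that is needed in the sequel.

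For the third identity, instead of expanding in terms of the definition once more, I would start from the right-hand side and substitute the first identity into the term $\frac{y}{2}(r(y)+1)$. This yields $\frac{y}{2}\cdot\frac{2}{1-\frac{y}{2}}=\frac{y}{1-\frac{y}{2}}$, which by the second identity is exactly $r(y)-1$. Hence the right-hand side becomes $(r(y)-1)-(\e^{y}-1)=r(y)-\e^{y}$, so that \eqref{eq:rme} holds and the constant terms cancel cleanly.

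There is no genuine obstacle here: each step is a one-line rational manipulation, and the only point worth attention is the bookkeeping in the third identity, where chaining the already-established formulas \eqref{eq:rp1} makes the cancellation of the constant terms transparent and avoids any reintroduction of the exponential or the explicit form of $r(y)$.
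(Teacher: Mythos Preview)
Your proposal is correct and matches the paper's approach: the paper simply states that these formulas ``are obtained with straightforward computations'' without writing anything further, and your direct rational manipulations are exactly the straightforward computations in question.
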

The rational approximation $r(y)$ satisfies the following result, in the context of homogeneous parabolic problems, which is proved in the Appendix~\ref{Appendix:A} (see~\cite[Theorem 2.1]{Han99} for a proof in a more general case).
\begin{theorem}\label{thm:hansbo}
For $u_0\in \mathcal{D}(A)$, where $A$ satisfies the assumptions of Section~\ref{Section:Framework} and $r(y)$ is defined in~\eqref{eq:rationalCN}, we have the following error estimate,
$$
\|(r(\tau A)^n-\e^{\tau n A})u_0\|_{L^p(\Omega)}\leq \frac{C \tau^2}{t_n}\|Au_0\|_{L^p(\Omega)},
$$ 
where $C$ is a constant independent of $u_0$, $\tau$, $n$ and $t_n=n\tau$. 
\end{theorem}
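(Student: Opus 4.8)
The plan is to pass to the Dunford--Cauchy (resolvent) representation and reduce the operator estimate to a scalar contour integral. Since $u_0\in\mathcal D(A)$ I would write $u_0=A^{-1}(Au_0)$ and use the functional calculus for the function $g(z)=z^{-1}(r(\tau z)^n-\e^{n\tau z})$,
\[
(r(\tau A)^n-\e^{n\tau A})A^{-1}=\frac{1}{2\pi i}\int_\Gamma \frac{r(\tau z)^n-\e^{n\tau z}}{z}\,R(z,A)\,\mathrm{dz},
\]
with $\Gamma=\partial\Sigma_\theta$ the same sector boundary used for $\e^{tA}$. This representation is legitimate and absolutely convergent: the numerator $r(\tau z)^n-\e^{n\tau z}$ vanishes (to second order) at $z=0$, so $g$ is analytic across the vertex of $\Gamma$, while at infinity $g(z)\sim(-1)^n/z$ and, by \eqref{Hyp:2}, $\|R(z,A)\|\le M/|z|$, so the integrand is $O(|z|^{-2})$. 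Using \eqref{Hyp:2} and rescaling $w=\tau z$ (which leaves $\Gamma$ invariant), everything reduces to the scalar bound
\[
J_n:=\int_{\Gamma}\frac{|r(w)^n-\e^{nw}|}{|w|^2}\,|\mathrm{dw}|\le\frac{C}{n},
\]
because then $\|(r(\tau A)^n-\e^{n\tau A})u_0\|\le\frac{M}{2\pi}\,\tau J_n\,\|Au_0\|$ and $\tau J_n\le C\tau/n=C\tau^2/t_n$.

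Everything now hinges on $J_n\le C/n$. I would parametrise $\Gamma$ by $w=\rho\,\e^{\pm i(\pi-\theta)}$, so that $\RE w=-\rho\cos\theta<0$, and record two elementary facts about the stability function \eqref{eq:rationalCN}. From the exact identity
\[
1-|r(w)|^2=\frac{-2\RE w}{|1-w/2|^2}=\frac{2\rho\cos\theta}{|1-w/2|^2},
\]
one reads off $|r(w)|\le\e^{-c\rho}$ for $\rho\le1$ and, crucially for the tail, $|r(w)|\le\e^{-c/\rho}$ for $\rho\ge1$, with $c>0$ depending only on $\theta$. The second bound expresses quantitatively that, although Crank--Nicolson is merely A-stable and not L-stable ($r(\infty)=-1$), the power $|r(w)|^n=\e^{-cn/\rho}$ still decays on the range $1\le\rho\lesssim n$ that carries the mass of the integral. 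Second, because the method is of order two, a Taylor expansion gives the consistency estimate $|r(w)-\e^w|\le C|w|^3$ for $|w|\le1$.

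I would then split $\Gamma$ at $|w|=1$. On the inner piece $|w|\le1$ the two terms $r(w)^n$ and $\e^{nw}$ are individually of size one near the origin, where $\int|w|^{-2}$ diverges, so only their difference is integrable and the cancellation must be kept. I would use the telescoping identity
\[
r(w)^n-\e^{nw}=\sum_{j=0}^{n-1} r(w)^{\,n-1-j}\,(r(w)-\e^{w})\,\e^{wj},
\]
and bound each factor by $|r(w)|\le\e^{-c\rho}$, $|\e^{w}|\le\e^{-c\rho}$ and $|r(w)-\e^w|\le C\rho^3$ to obtain $|r(w)^n-\e^{nw}|\le Cn\rho^3\e^{-cn\rho}$; hence $\int_{0}^{1}\rho^{-2}|r(w)^n-\e^{nw}|\,d\rho\le Cn\int_0^\infty\rho\,\e^{-cn\rho}\,d\rho=C/n$. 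On the outer piece $|w|\ge1$ the exponential $\e^{nw}$ is harmless and I would simply estimate $|r(w)^n-\e^{nw}|\le\e^{-cn/\rho}+\e^{-cn\rho}$; the substitution $s=n/\rho$ turns $\int_1^\infty\rho^{-2}\e^{-cn/\rho}\,d\rho$ into $\tfrac1n\int_0^n\e^{-cs}\,ds\le C/n$, while $\int_1^\infty\rho^{-2}\e^{-cn\rho}\,d\rho\le C/n$ is immediate. Adding the two contributions yields $J_n\le C/n$ and the theorem follows.

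The step I expect to be the main obstacle is the outer region together with the non-decaying tail $r(\tau z)^n\to(-1)^n$: the naive bound $|r(w)|^n\le1$ only yields first order $O(\tau)$, and it is precisely the sharp decay $|r(w)|\le\e^{-c/\rho}$ for large $\rho$, giving $|r(w)|^n=\e^{-cn/\rho}$ on $1\le\rho\lesssim n$, that upgrades the estimate to the sharp $O(\tau^2/t_n)$. The complementary delicate point is that near the origin the integrand is controlled only through the cancellation in $r(w)^n-\e^{nw}$ and never termwise, which forces the telescoping bound $Cn\rho^3\e^{-cn\rho}$ rather than a triangle-inequality split.
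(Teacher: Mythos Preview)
Your argument is correct, and the scalar engine you use --- the bounds $|r(w)|\le e^{-c\rho}$ for $\rho\le1$ and $|r(w)|\le e^{-c/\rho}$ for $\rho\ge1$, the consistency estimate $|r(w)-e^w|\le C|w|^3$, the telescoping identity on the inner arc, and the split of the contour at $|w|=1$ --- is exactly the machinery the paper sets up in its Appendix. The packaging, however, is genuinely different. The paper never absorbs the factor $A^{-1}$ coming from $u_0=A^{-1}(Au_0)$ into the symbol. Instead it exploits the algebraic identity $r(z)=(1+z/2)\,r_0(z/2)$, with $r_0(z)=1/(1-z)$ the implicit-Euler stability function, to peel off one or two Crank--Nicolson steps as implicit-Euler half-steps. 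This produces the auxiliary symbols $F_n(z)=r(z)^{n-2}r_0(z/2)^2-e^{(n-1)z}$ and $G_n(z)=r(z)^{n-1}r_0(z/2)-e^{(n-1/2)z}$, which now decay at infinity thanks to the $r_0$ factors, and for which the paper proves operator-norm bounds $\|F_n(\tau A)\|\le C/n^2$ and $\|G_n(\tau A)\|\le C/n$ (Proposition~\ref{prop:compositionCN}). The factor $Au_0$ then enters through the leftover polynomial $(1+\tfrac{\tau}{2}A)$ applied to $u_0$, together with residual terms $(1+\tfrac{\tau}{2}A-e^{\tau A/2})e^{tA}u_0$ controlled via parabolic smoothing. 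Your route is shorter and yields the stated inequality in a single stroke; the paper's detour through Proposition~\ref{prop:compositionCN} delivers, as a by-product, uniform operator-norm estimates for the smoothed compositions $r(\tau A)^{n-j}r_0(\tfrac{\tau}{2}A)^j$ that require no regularity of $u_0$ and that slot into the more general Hansbo framework cited there.
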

Note that this corresponds to an estimate of the splitting~\eqref{eq:StrangfDfcn} for the specific case where the problem is homogeneous, i.e. $f=0$, and with homogeneous boundary conditions. 
In what follows, we give an exact representation of the numerical solution of the splitting~\eqref{eq:StrangfDfcn} in term of the operator $A$. We recover homogeneous boundary conditions with an appropriate change of variable. Let $z\in W^{2,p}(\Omega)$ be the same function chosen in~\eqref{eq:exactSol} and satisfying $Bz=b$ on $\partial\Omega$. Defining $\tilde{v}_1=v_1-z$, we rewrite equation~\eqref{eq:CNv} as follows,
$$
2\frac{\tilde{v}_1-u_0}{\tau}+\frac{2}{\tau}z=A\tilde{v}_1+Dz,
$$
where we recall that the homogeneous boundary conditions are included in the domain of $A$.
This gives the following expression for $\tilde{v}_1$,
$$
\tilde{v}_1=\left(I-\frac{\tau}{2}A\right)^{-1}u_0+\frac{\tau}{2}\left(I-\frac{\tau}{2}A\right)^{-1}Dz-\left(I-\frac{\tau}{2}A\right)^{-1}z.
$$
We denote $u_1=\phi^{D,CN}_{\tau}(u_0)$, the solution of the problem~\eqref{eq:CN}. Therefore, from equation~\eqref{eq:2vu}, we have the following formula,
\begin{align*}
u_1=\phi^{D,CN}_\tau(u_0)&=2\tilde{v}_1-u_0+2z\\
&=\left(2\left(I-\frac{\tau}{2}A\right)^{-1}-I\right)(u_0-z)+\tau\left(I-\frac{\tau}{2}A\right)^{-1}Dz+z.
\end{align*}
Using the equalities~\eqref{eq:rp1}, we obtain
\begin{equation}\label{eq:SolCN}
\phi^{D,CN}_{\tau}(u_0)=z+r(\tau A)(u_0-z)+(r(\tau A)-I)A^{-1}Dz.
\end{equation}
One step of the splitting~\eqref{eq:StrangfDfcn}, is denoted by $\mathcal{S}_{\tau}$, i.e.
\begin{equation*}
u_{n+1}=\mathcal{S}_{\tau}(u_n):=\phi_\frac{\tau}{2}^f\circ\phi^{D,CN}_{\tau}\circ\phi_\frac{\tau}{2}^f(u_n).
\end{equation*}
Using the following representation of the exact flow $\phi_t^f$ for $f=f(x)$,
\begin{equation}\label{eq:SolSource}
\phi^f_{t}(u_0)=u_0+tf,
\end{equation}
and formula~\eqref{eq:SolCN} for $\phi^{D,CN}_{\tau}$, we obtain the following expression for $\mathcal{S}_{\tau}(u_n)$:
\begin{align}\label{eq:numerical}
\mathcal{S}_{\tau}(u_n)&=z+r(\tau A)(u_n-z)+(r(\tau A)-I)A^{-1}Dz+\frac{\tau}{2}(r(\tau A)+I)f.
\end{align}
\begin{remark}\label{Rem:RK}
Take any $A$-stable Runge-Kutta method and denote by $R(z)$ its stability function.  Then if this Runge-Kutta method is used to solve the diffusion equation~\eqref{eq:diffusion}, instead of the Crank-Nicolson method, one obtains formula~\eqref{eq:numerical} for the numerical solution with $r(\tau A)$ replaced with $R(\tau A)$. Indeed, let $u$ be the solution of the diffusion equation~\eqref{eq:diffusion}. Let $z\in W^{2,p}(\Omega)$ be a function satisfying $Bz=b$ on $\partial\Omega$. Then, we have $\partial_t u=A(u-z+A^{-1}Dz)$ for  $t>0$. Defining $y=u-z+A^{-1}Dz$, we obtain the following equivalent problem,
$$
\partial_t y = Ay\  \ \mbox{for}\ t>0, \quad y(0)=u_0-z+A^{-1}Dz.
$$
Applying one step of the Runge-Kutta method with initial condition $y(0)$ and with a time step $\tau$ gives,
$$
y_1=R(\tau A)(u_0-z+A^{-1}Dz).
$$
Hence, we obtain the same formula~\eqref{eq:SolCN} with $r(\tau A)$ replaced with $R(\tau A)$ for the numerical solution of the diffusion~\eqref{eq:diffusion},
$$
u_1=R(\tau A)(u_0-z+A^{-1}Dz)+z-A^{-1}Dz=z+R(\tau A)(u_0-z)+(R(\tau A)-1)A^{-1}Dz.
$$
\end{remark}
\subsection{Local error}
We start with the following proposition that gives an exact representation of the local error.
\begin{proposition}\label{prop}
The local error $\delta_{n+1}=\mathcal{S}_{\tau}(u(t_n))-u(t_{n+1})$ of the splitting~\eqref{eq:StrangfDfcn} satisfies the following identity,
\begin{equation*}
\delta_{n+1}=(r(\tau A)-\e^{\tau A})A^{-1}\e^{t_n A}(Du_0+f).
\end{equation*}
\end{proposition}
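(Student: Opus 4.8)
The plan is to subtract two explicit representations, both written purely in terms of the holomorphic functional calculus of $A$, so that all the operators commute and the identity reduces to algebra in the single variable $\tau A$. On the numerical side I would start from the closed form \eqref{eq:numerical} evaluated at $u_n=u(t_n)$. On the exact side I would use that for $f=f(x)$ the Duhamel integral in \eqref{eq:exactSol} has the constant integrand $f+Dz$, so that $\int_0^\tau \e^{(\tau-s)A}\,ds=A^{-1}(\e^{\tau A}-I)$ and hence
\begin{equation*}
u(t_{n+1})=z+\e^{\tau A}(u(t_n)-z)+A^{-1}(\e^{\tau A}-I)(f+Dz).
\end{equation*}

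Subtracting these two expressions, the terms proportional to $u(t_n)-z$ immediately give the factor $r(\tau A)-\e^{\tau A}$. The two contributions involving $Dz$, namely $(r(\tau A)-I)A^{-1}Dz$ from the scheme and $-(\e^{\tau A}-I)A^{-1}Dz$ from the exact flow, combine into $(r(\tau A)-\e^{\tau A})A^{-1}Dz$.

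The only genuinely delicate bookkeeping is the source contribution, where the scheme produces $\frac{\tau}{2}(r(\tau A)+I)f$ while the exact flow produces $A^{-1}(\e^{\tau A}-I)f$. Here I would invoke the second identity of Lemma~\ref{lemma:1}, applied in the functional calculus to $y=\tau A$ and multiplied on the right by $A^{-1}$, which rewrites $\frac{\tau}{2}(r(\tau A)+I)-A^{-1}(\e^{\tau A}-I)=(r(\tau A)-\e^{\tau A})A^{-1}$. With this, every surviving term carries the common left factor $r(\tau A)-\e^{\tau A}$, and I can collect
\begin{equation*}
\delta_{n+1}=(r(\tau A)-\e^{\tau A})\bigl[(u(t_n)-z)+A^{-1}(f+Dz)\bigr].
\end{equation*}

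It then remains to identify the bracket with $A^{-1}\e^{t_n A}(Du_0+f)$. For this I would again use the Duhamel formula to write $u(t_n)-z=\e^{t_n A}(u_0-z)+A^{-1}(\e^{t_n A}-I)(f+Dz)$; the two copies of $-A^{-1}(f+Dz)$ then cancel, leaving $\e^{t_n A}\bigl[(u_0-z)+A^{-1}(f+Dz)\bigr]=A^{-1}\e^{t_n A}\bigl[A(u_0-z)+f+Dz\bigr]$. Finally, since $Bu_0=b=Bz$ on $\partial\Omega$ we have $u_0-z\in\mathcal{D}(A)$, so $A(u_0-z)=Du_0-Dz$ and the bracket collapses to $Du_0+f$, yielding the claimed formula. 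I expect the main obstacle to be purely organizational: keeping the three families of terms ($u(t_n)-z$, $Dz$, and $f$) separate and correctly applying Lemma~\ref{lemma:1} in operator form, the legitimacy of which rests on $A^{-1}$ being bounded (as noted after \eqref{Hyp:2}) so that all functions of $A$ involved are well defined and commute.
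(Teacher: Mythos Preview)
Your proposal is correct and follows essentially the same route as the paper: both subtract \eqref{eq:numerical} from the exact Duhamel formula, use the identity \eqref{eq:rme} of Lemma~\ref{lemma:1} to fold the $f$-terms into the common factor $r(\tau A)-\e^{\tau A}$, and arrive at $\delta_{n+1}=(r(\tau A)-\e^{\tau A})\bigl[(u(t_n)-z)+A^{-1}(f+Dz)\bigr]$. The only cosmetic difference is in the last step: the paper recognizes the bracket as $A^{-1}\partial_t\tilde u(t_n)$ and then evaluates $\partial_t\tilde u(t_n)$ via variation of constants, whereas you plug in the Duhamel expression for $u(t_n)-z$ directly---the two computations are line-for-line the same after multiplying by $A$.
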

\begin{proof}
We have the following representation of $\delta_{n+1}$:
\begin{align}
\delta_{n+1}&=(r(\tau A)-\e^{\tau A})(u(t_n)-z)+(r(\tau A)-\e^{\tau A})A^{-1}Dz\nonumber\\
&+\frac{\tau}{2}(r(\tau A)+I)f-A^{-1}(\e^{\tau A}-I)f.\label{eq:LocalError}
\end{align}
From formula~\eqref{eq:rme}, we obtain,
\begin{equation}\label{eq:LocalError2}
\delta_{n+1}=(r(\tau A)-\e^{\tau A})(u(t_n)-z+A^{-1}Dz+A^{-1}f).
\end{equation}
From equation~\eqref{eq:parabolicZ}, we know that $\tilde{u}(t_n)$ is the solution of the following problem, 
\begin{align*}\label{eq:LocalErrorFormula1}
\partial_t \tilde{u}(t) = A \tilde{u}(t) + f+Dz\quad \mathrm{for}\ t\in(0,T] , \qquad \tilde{u}(0)=u_0-z.
\end{align*}
Hence, $\delta_{n+1}$ satisfies 
\begin{equation*}\label{eq:LocalErrorFormula2}
\delta_{n+1}=(r(\tau A)-\e^{\tau A})A^{-1}\partial_t \tilde{u}(t_n).
\end{equation*}
Using the variation of constant formula, we obtain,
\begin{align*}
\partial_t \tilde{u}(t_n) &= A \tilde{u}(t_n) + f+Dz\\
&=A \e^{t_n A}\tilde{u}_0+A\int_0^{t_n}\e^{(t_n-s)A}(f+Dz)\mathrm{ds} + f+Dz\\
&=A \e^{t_n A}\tilde{u}_0+(\e^{t_nA}-I)(f+Dz) + f+Dz\\
&=\e^{t_n A}(A\tilde{u}_0+f+Dz)\\
&=\e^{t_n A}(Du_0+f)
\end{align*}
where we use $A\tilde{u}_0+Dz=D(\tilde{u}_0+z)=Du_0$.
This concludes the proof.
\end{proof}
\begin{remark}
Assume that another Runge-Kutta method, with stability function $R(y)$, is used to solve the diffusion equation~\eqref{eq:diffusion} involved in the splitting method~\eqref{eq:StrangfDfcn}. Then, from Remark~\ref{Rem:RK}, we observe that the local error $\delta_{n+1}$ still satisfies formula~\eqref{eq:LocalError} with $r(\tau A)$ replaced with $R(\tau A)$.
However, the representation~\eqref{eq:LocalError2} is specific to Runge-Kutta methods having $r(y)$ as a stability function. Indeed, to find this representation of the local error, we used formula~\eqref{eq:rme} which is a property satisfied only by the (1,1)-Padé approximation~\eqref{eq:rationalCN}.   
\end{remark}
\subsection{Global error}
We are now in position to prove Theorem~\ref{thm:fx} for the global error $e_n=u_n-u(t_n)$ of the splitting~\eqref{eq:StrangfDfcn}. We observe that 
$$
e_{n+1}=\mathcal{S}_{\tau}(u_n)-\mathcal{S}_{\tau}u(t_n)+\mathcal{S}_{\tau}u(t_n)-u(t_{n+1})=\mathcal{S}_{\tau}(u_n)-\mathcal{S}_{\tau}u(t_n)+\delta_{n+1}.
$$
\begin{proof}[Proof of Theorem~\ref{thm:fx}]
From formula~\eqref{eq:numerical} for $\mathcal{S}_{\tau}(u_n)$, we obtain
\begin{align*}
\mathcal{S}_{\tau}(u_n)-\mathcal{S}_{\tau}u(t_n)&=r(\tau A)(u_n-u(t_n)).
\end{align*}
Therefore, we deduce
\begin{align*}
e_{n+1}&=r(\tau A) e_n +\delta_{n+1}.
\end{align*}
Hence, since $\e_0=0$, and using Proposition~\ref{prop} for the local error, we obtain
\begin{align*}
e_n&=\sum_{k=0}^{n-1}r(\tau A)^{n-k-1}\delta_{k+1}\\
&=\bigg((r(\tau A)-\e^{\tau A})\sum_{k=0}^{n-1}r(\tau A)^{n-k-1}\e^{k \tau A}\bigg)A^{-1}(Du_0+f)\\
&=\left(r(\tau A)^n-\e^{n\tau A}\right)A^{-1}(Du_0+f).
\end{align*}
Therefore, using that $A$ satisfies the hypotheses of Theorem~\ref{thm:hansbo}, and since $A^{-1}(Du_0+f)\in \mathcal{D}(A)$, we obtain the following estimate,
\begin{align*}
\|e_n\|_{L^p(\Omega)}\leq \frac{C}{t_n}\tau^2\|Du_0+f\|_{L^p(\Omega)},
\end{align*}
which concludes the proof of Theorem~\ref{thm:fx}.
\end{proof}
\begin{remark}
Formula~\eqref{eq:global} can also be obtained directly with an appropriate change of variable. This alternative proof makes clear why the global error $e_n$ can be expressed as a difference between the rational function $r(\tau A)^n$ and the semigroup $\e^{n\tau A}$.
Indeed by the affine change of variable $\hat{u}(x,t)=u(x,t)-\hat{z}(x)$,
where $\hat{z}(x)=z(x)-A^{-1}Dz(x)-A^{-1}f(x)$, we obtain $\partial_t \hat{u}(x,t)=A\tilde{u}(x,t)$.
Therefore, we can rewrite equation~\eqref{eq:parabolicfx} as follows
\begin{align*}
\partial_t \hat{u}(x,t) &= A\hat{u}(x,t)\ \ \mathrm{in}\ \Omega\times (0, \infty),\nonumber\\
 \quad B\hat{u}(x,t) &= 0\ \ \mathrm{on}\ \partial\Omega\times (0,\infty), \nonumber\\ 
 \hat{u}(x,0)&=u_0(x)-\hat{z}(x)\quad \mathrm{in}\ \Omega,
\end{align*}
whose solution is given by $\hat{u}(x,t)=\e^{tA}A^{-1}(Du_0(x)+f(x))$.
Hence, we have the following formula for the exact solution, 
\begin{align}\label{eq:exact}
u(x,t)=\e^{tA}A^{-1}(Du_0(x)+f(x))+\hat{z}(x).
\end{align}
By Remark~\ref{rem:CNequiv}, we know that the numerical solution $u_n$ of the splitting~\eqref{eq:StrangfDfcn} is given by $n$ iterations of the Crank-Nicolson method applied to the whole problem~\eqref{eq:parabolicfx}. Since Runge-Kutta methods are affine invariant, we obtain the following formula for $u_n(x)$,
\begin{equation}\label{eq:unRK}
u_n(x)=r(\tau A)^n A^{-1}(Du_{0}(x)+f(x))+\hat{z}(x).
\end{equation}
Subtracting~\eqref{eq:unRK} and~\eqref{eq:exact} at time $t_n$, we therefore obtain formula~\eqref{eq:global} for the global error.
Note that for any A-stable Runge-Kutta method with stability function $R(y)$, applied to the whole problem~\eqref{eq:parabolicfx}, the numerical solution $u_n$ and the error $e_n$ are given by formulas~\eqref{eq:unRK} and~\eqref{eq:global} with $r(\tau A)$ replaced with $R(\tau A)$.
\end{remark}
\section{Numerical experiments}\label{Sec:num}
We first describe the parameters and the notations that we use for the numerical experiments that follow.

 For the one dimensional problems, we choose $N=1000$ uniform grid points to discretize the domain $\Omega=(0,1)$, i.e. the mesh is of size $h=\frac{1}{N}=10^{-3}$. We denote by $U_{n,l}$ and $U_l(t_n)$, the approximations of $u_n(x_l)$ and $u(t_n,x_l)$, where $x_l=lh$, i.e. $U_n$ and $U(t_n)$ are vectors in $\mathbb{R}^N$. For the two dimensional problems, we discretize the domain $\Omega=(0,1)^2$ with a uniform mesh of size $h=10^{-2}$. We denote by $U_{n,l,m}$ and $U_{l,m}(t_n)$  the approximations of $u_n(x_l,y_m)$ and $u(t_n,x_l,y_m)$, where $y_m=mh$. The operators $\partial_{xx}$ and $\partial_{xx}+\partial_{yy}$ are approximated with the standard second order finite difference approximation and ghost points are used for the normal derivatives in the boundary conditions.
The final time is $T=0.1$. We apply all considered splitting methods with the time steps $\tau=0.02\cdot2^{-k}$ for $k=0,\ldots,6$.

 In the splitting algorithms, the source term equation~\eqref{eq:source} is solved exactly. In the splitting~\eqref{eq:StrangfDf} and~\eqref{eq:StrangDfD}, when we say that the diffusion equation is solved exactly, we mean that it is solved with a Krylov based algorithm developed in~\cite{Nie12}, with a tolerance close to machine precision. The reference solutions are computed with the Crank-Nicolson method (for $d=1$) and the classical four stage Runge-Kutta method (for $d=2$) with a small time step $\tau=0.02\cdot2^{-10}\approx 2\cdot 10^{-5}$.
 
The splitting~\eqref{eq:StrangfDf} using the exact flow of the diffusion part is denoted \emph{StrangEXP} and the splitting~\eqref{eq:StrangfDfcn} is denoted \emph{StrangCN}. 
We also consider the splitting methods \emph{StrangGauss}, \emph{StrangRadau} and \emph{StrangLobatto}, which are constructed similarly to the splitting method~\emph{StrangCN}, but where the diffusion problem~\eqref{eq:diffusion} is approximated  with the two stage Gauss method (order 4), the two stage Radau 1a method (order 3) and the two stage Lobatto 3c method (order 2) (see the Runge-Kutta Butcher tableau and stability functions in Appendix~\ref{Appendix:B}).
Similarly, the splitting~\eqref{eq:StrangDfD} is denoted by \emph{StrangEXP2}. We denote by \emph{StrangCN2}, \emph{StrangGauss2}, \emph{StrangRadau2} and \emph{StrangLobatto2}, the splitting methods corresponding to~\eqref{eq:StrangDfD}, where the diffusion equation~\eqref{eq:diffusion} is approximated with one of the methods described above.
 
The error of a splitting method at time $t_k=k\tau$ is defined as $u_k-u(k\tau)$, where $u(k\tau)$ is given by the reference solution at time $k\tau$. In the numerical experiments we always estimate the error with the trapezoidal approximation of the $L^2(\Omega)$ norm at final time $T=n\tau$ (except in Figure~\ref{Fig:ExvsCNb}). In dimension one, the estimate of the $L^2(0,1)$ error $E_k$ at time $t_k$ is given by
$$
E^2_k:=h\sum_{l=1}^{N-1} \frac{|U_{k,l}-U_l(t_k)|^2+|U_{k,l+1}-U_{l+1}(t_k)|^2}{2}.
$$
In dimension two, the $L^2((0,1)^2)$ error $E_k$ is approximated similarly,
\begin{align*}
E_k^2:&= h^2\sum_{l,m=1}^{N-1} \frac{|U_{k,l,m}-U_{l,m}(t_k)|^2+|U_{k,l+1,m}-U_{l+1,m}(t_k)|^2}{4}\\
&+\frac{|U_{k,l,m+1}-U_{l,m+1}(t_k)|^2+|U_{k,l+1,m+1}-U_{l+1,m+1}(t_k)|^2}{4}.
\end{align*}
In Figure~\ref{Fig:ExvsCNb}, we consider additional norms to estimate the error.  We consider the approximation of the $L^{\infty}([0,T],L^2(0,1))$ norm of the error,  
\begin{equation}\label{eq:norm1}
E_{\infty,0}:=\max_{k=0,\ldots, n}E_k.
\end{equation}
Similarly, we consider the approximation of the $L^{\infty}([0.02,T],L^2(\Omega))$ norm of the error,
\begin{equation}\label{eq:norm2}
E_{\infty,0.02}:=\max_{k=\frac{0.02}{\tau},\ldots, n}E_k.
\end{equation}
Another estimate of the error is provided where we compute an approximation of the $L^{\infty}([0,T],L^2(0,1))$ norm of time multiplied by the error, precisely
\begin{equation}\label{eq:norm3}
\hat{E}_{\infty,0}:=\max_{k=0,\ldots, n}\|t_kE_k\|_2.
\end{equation}
\begin{remark}\label{rem:CNPrecision}
In Figure~\ref{Fig:Stationary}, we implement the Crank-Nicolson method using the standard implementation of Runge-Kutta methods to avoid rounding error. To solve $\frac{dx}{dt}(t)=Ax(t)$ with $x(0)=x_0$, we start to resolve the linear system
$
k_1=Ax_0$ and
$
k_2=Ax_0+\frac{ \tau}{2}A(k_1+k_2).
$
Then, we write
$
x_1=x_0+\frac{\tau}{2}k_1+\frac{\tau}{2}k_2,
$
and similarly for the two stage Gauss method. For even higher accuracy, one should use in addition a compensated summation algorithm (see~\cite[Algorithm VIII.5.1]{hlw10}). 
\end{remark}
\subsection{Solution independent source term}
In the first series of experiments, we consider the following parabolic problem on $\Omega=(0,1)$ with $t\in [0,T]$, with Dirichlet boundary conditions, which satisfies the hypotheses of Theorem~\ref{thm:fx},
\begin{align}\label{eq:parabolic1}
\partial_t u(x,t) = \partial_{xx} u(x,t) + 1\quad \mathrm{in}\ (0,1)\times (0,T], \qquad u(0,t)=u(1,t)=1, \qquad u(x,0)=1.
\end{align}

\begin{figure}
\centering
\begin{subfigure}{.5\textwidth}
\centering
\includegraphics[width=\textwidth]{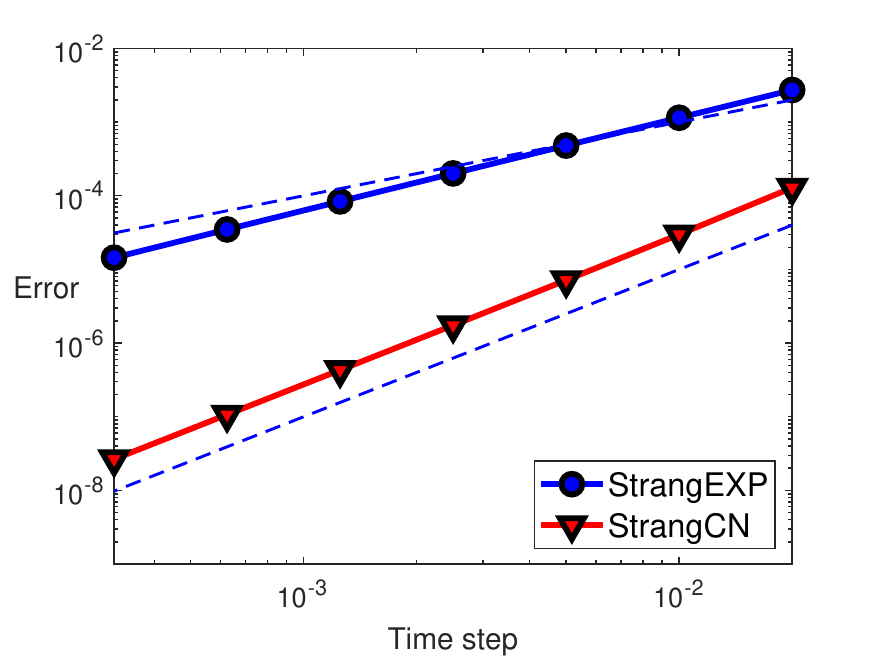} 
\caption{\textit{\emph{StrangCN} has no order reduction.}}\label{Fig:ExvsCNa}
\end{subfigure}%
\begin{subfigure}{.5\textwidth}
\centering
\includegraphics[width=\textwidth]{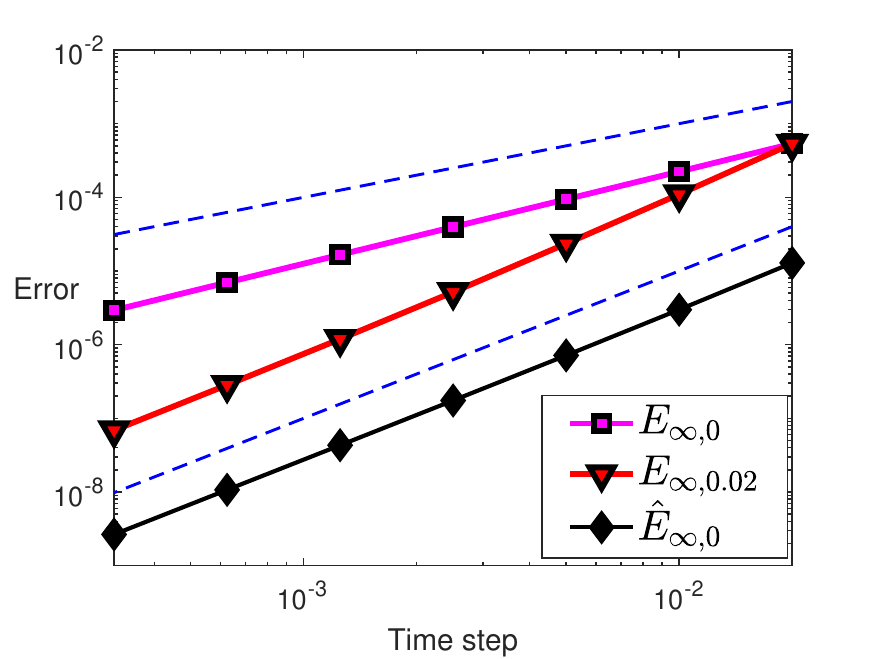}
\caption{\textit{\emph{StrangCN} is not of order two near $t=0$.}}\label{Fig:ExvsCNb}
\end{subfigure}
\caption{\textit{Solving the diffusion part~\eqref{eq:diffusion} with the Crank-Nicolson scheme allows to remove the reduction of order of the Strang splitting method at final time $T=0.1$, when applied to the 1d problem $\partial_t u=\partial_{xx} u+1$ with inhomogeneous Dirichlet boundary conditions. However, in a neighbourhood of $t=0$, the reduction of order is not avoided. Reference slopes one and two are given in dashed lines.}}\label{Fig:ExvsCN}
\end{figure}

In Figure~\ref{Fig:ExvsCNa}, we compare the splitting \emph{StrangEXP} and the splitting \emph{StrangCN} for the problem~\eqref{eq:parabolic1}. This simple example illustrates the superiority of the splitting~\eqref{eq:StrangfDfcn} compared to the splitting~\eqref{eq:StrangfDf}. Indeed, the splitting method~\eqref{eq:StrangfDfcn} avoids order reduction contrary to the Strang splitting~\eqref{eq:StrangfDf} and it allows a high gain of accuracy for no additional computational cost. Note that in the non-generic case where the source term satisfies the condition $Bf(x)=0$ on the boundary, there is no order reduction since no perturbation of the boundary occurs when solving the source equation~\eqref{eq:source} in the splitting~\eqref{eq:StrangfDf}. For example, for the problem $\partial_t u(x,t)=\partial_{xx}u(x,t) +\e^{-x}$ with $u(0,t)-\partial_n u(0,t)=1$, $u(1,t)+\partial_n u(1,t)=1$, and $u(x,0)=1$, the splitting~\eqref{eq:StrangfDf} is of order two and its convergence curve is superposed to the one of the splitting~\eqref{eq:StrangfDfcn}. The convergence curves are not drawn for conciseness.
  
In Figure~\ref{Fig:ExvsCNb}, we estimate the error of the splitting \emph{StrangCN}, applied to the problem~\eqref{eq:parabolic1} with the norm $L^{\infty}([0,T],L^2(0,1))$ and the norm $L^{\infty}([0,T],L^2(0.02,1))$, i.e. we compare $E_{\infty,0}$ and $E_{\infty,0.02}$, given by~\eqref{eq:norm1} and~\eqref{eq:norm2}. We observe that, $E_{\infty,0}$ does not decrease quadratically with respect to the time step $\tau$. This is expected since the bound of the error of the splitting~\eqref{eq:StrangfDfcn} given in Theorem~\ref{thm:fx} has order reduction down to one in a neighbourhood of $t=0$. In comparison, with $E_{\infty,0.02}$, which avoids a neighbourhood of $t=0$, we recover the second order convergence. We also observe that $\hat{E}_{\infty,0}$, given by formula~\eqref{eq:norm3}, decays quadratically with respect to $\tau$. This suggests that the error estimate $\bigo(\frac{\tau^2}{t_n})$ of Theorem~\ref{thm:fx} is optimal in a neighbourhood of $t=0$.
\begin{figure}
\centering
\begin{subfigure}{.5\textwidth}
\centering
\includegraphics[width=\textwidth]{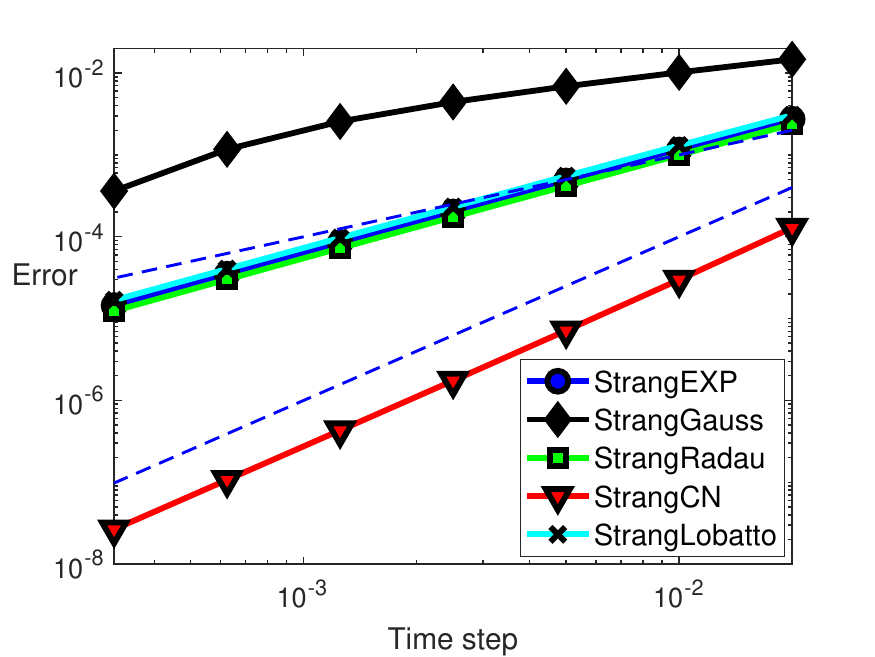} 
\caption{\textit{$u_{n+1}=\phi_\frac{\tau}{2}^f\circ\phi_\tau^D\circ\phi_\frac{\tau}{2}^f(u_n)$.}}\label{Fig:CompRKa}
\end{subfigure}%
\begin{subfigure}{.5\textwidth}
\centering
\includegraphics[width=\textwidth]{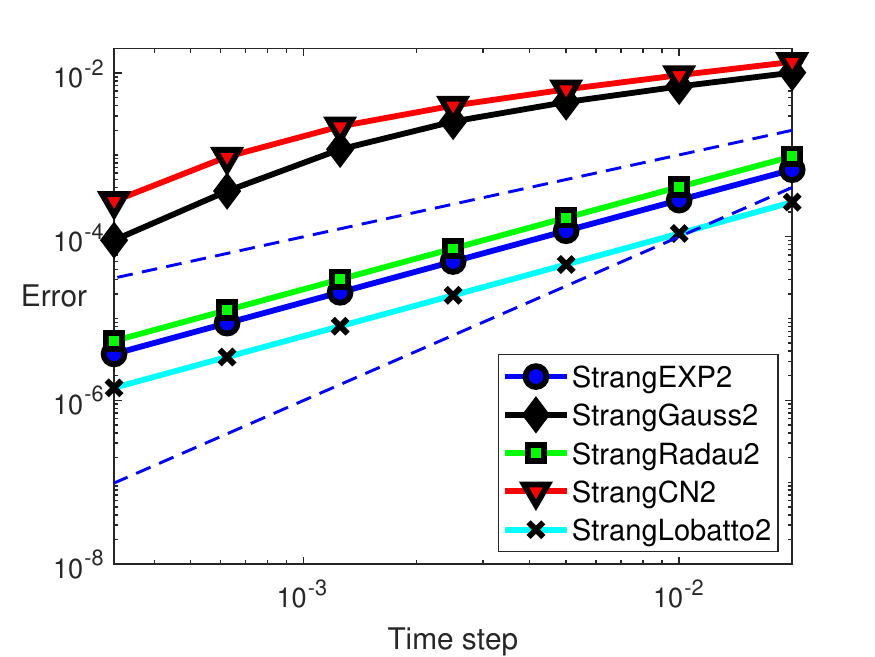}
\caption{\textit{$u_{n+1}=\phi_\frac{\tau}{2}^D\circ\phi_\tau^f\circ\phi_\frac{\tau}{2}^D(u_n)$.}}\label{Fig:CompRKb}
\end{subfigure}
\caption{\textit{The reduction of order of the Strang splitting~\eqref{eq:StrangfDf} method is not avoided when the following 2 stage Runge Kutta methods are used to solve the diffusion~\eqref{eq:diffusion}: Gauss (order 4), Radau 1a (order 3) or Lobatto 3c (order 2). The 1d problem considered is $\partial_t u=\partial_{xx} u+1$ with inhomogeneous Dirichlet boundary conditions. In addition, for the Strang splitting~\eqref{eq:StrangDfD}, solving the diffusion~\eqref{eq:diffusion} with the Crank-Nicolson method does not permit to remove the reduction of order. Reference slopes one and two are given in dashed lines.}}\label{Fig:CompRK}
\end{figure}

In Figure~\ref{Fig:CompRKa}, we approximate the diffusion part~\eqref{eq:diffusion} of the splitting~\eqref{eq:StrangfDf} with a variety of Runge-Kutta methods. We use the 2 stage Gauss method (order 4), the 2 stage Radau 1a method (order 3) and the 2 stage Lobatto 3c method (order 2) (see Appendix~\ref{Appendix:B} for the Butcher tableau of these methods). We also compute the error when the diffusion is solved exactly and when the Crank-Nicolson method is used, corresponding to the splitting method~\eqref{eq:StrangfDf} and~\eqref{eq:StrangfDfcn}. Except for Crank-Nicolson, none of the classical Runge-Kutta methods that we tested allows to remove the order reduction. We observe that the 2 stage Gauss method is the method for which the error is the largest, when in comparison the Crank-Nicolson method (equivalently the 1 stage Gauss method) is by far the method for which the error is the smallest for all considered time steps $\tau$.

In Figure~\ref{Fig:CompRKb} we apply the Strang splitting method~\eqref{eq:StrangDfD} instead of the Strang splitting~\eqref{eq:StrangfDf} to the problem~\eqref{eq:parabolic1}. The same experiment is then performed where we approximate the diffusion equation~\eqref{eq:diffusion} with different Runge-Kutta methods. We see that, for the Strang splitting~\eqref{eq:StrangDfD}, the Crank-Nicolson method does not allow to remove the order reduction. Surprisingly, it turns out that it is, amongst the methods tested, the scheme for which the error is the largest.

\subsection{Solution dependent source term}
\begin{figure}
\centering
\begin{subfigure}{.5\textwidth}
\centering
\includegraphics[width=\textwidth]{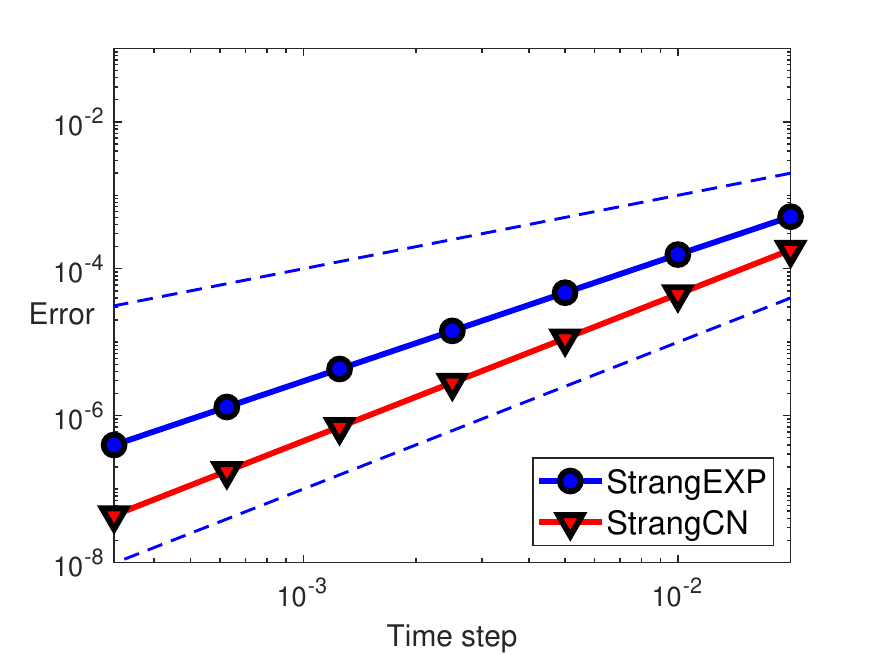} 
\caption{\textit{$f(u)=u$ with Robin boundary conditions.}}\label{Fig:fua}
\end{subfigure}%
\begin{subfigure}{.5\textwidth}
\centering
\includegraphics[width=\textwidth]{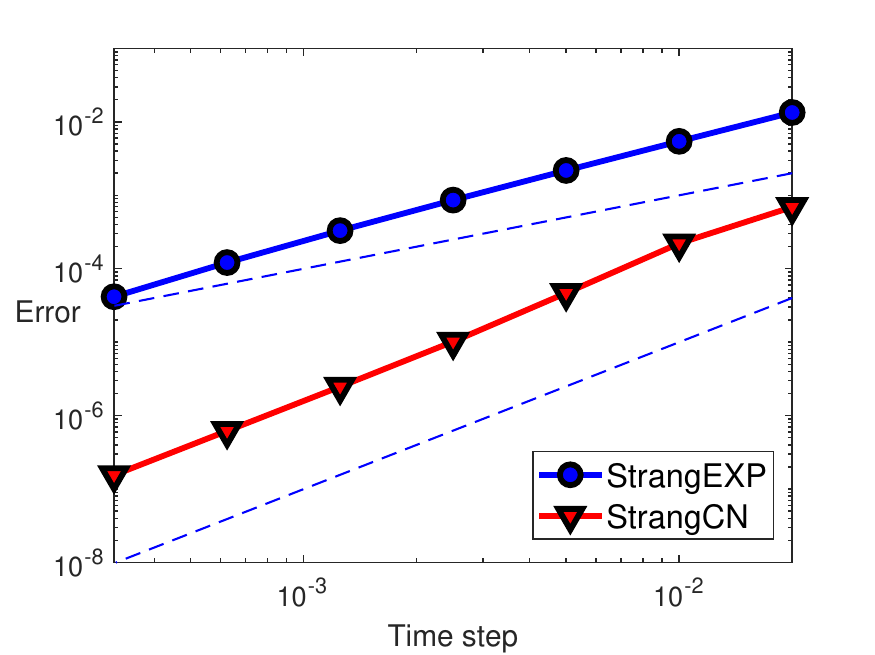}
\caption{\textit{$f(u)=u^2$ with mixed boundary conditions.}}\label{Fig:fub}
\end{subfigure}
\caption{\textit{The Strang splitting method~\eqref{eq:StrangfDf} has no order order reduction when the diffusion equation~\eqref{eq:diffusion} is solved using the Crank-Nicolson scheme. On the left picture, the equation is $\partial_t u = \Delta u + u$ with Robin boundary conditions. On the right picture, the equation is $\partial_t u = \Delta u+u^2$ with Neumann boundary conditions on the left and bottom boundaries and Dirichlet boundary conditions on the top and right boundaries. Reference slopes one and two are given in dashed lines.}}\label{Fig:fu}
\end{figure}

In Figure~\ref{Fig:fua}, we consider the following two dimensional problem with Robin boundary conditions, for $(x,y)\in\Omega=(0,1)^2$, $t\in [0,T]$,
\begin{align}
\partial_t u(x,y,t) &= \partial_{xx} u(x,y,t)+\partial_{yy}u(x,y,t)+u(x,y,t),\nonumber\\
u(0,y,t)+\partial_n u(0,y,t)&=y^2, \quad
u(1,y,t)+\partial_n u(1,y,t)=y^2+2,\nonumber \\
u(x,0,t)+\partial_n u(x,0,t) &= x^2, \quad
u(x,1,t)+\partial_n u(x,1,t)  = x^2+2,                          \nonumber\\
u(x,y,0)&=x^2+y^2.
\end{align}\label{eq:u}
As already observed, we see that the splitting \emph{StrangEXP} suffers from order reduction, when in comparison the splitting~\eqref{eq:StrangfDfcn} is of order two.

In Figure~\ref{Fig:fub}, we consider the following problem, for $(x,y)\in\Omega=(0,1)^2$, $t\in [0,T]$, 
\begin{align}
\partial_t u(x,y,t) &= \partial_{xx} u(x,y,t)+\partial_{yy}u(x,y,t)+u^2(x,y,t),\nonumber\\
\partial_n u(0,y,t)&=\frac{1}{2}, \quad
u(1,y,t)=\frac{\e^1+\e^y}{2},\nonumber \\
\partial_n u(x,0,t) &= \frac{1}{2}, \quad
u(x,1,t)  = \frac{\e^x+\e^1}{2},    \nonumber \\
u(x,y,0)&=\frac{\e^x+\e^y}{2}.\label{eq:u2}
\end{align}
For problem~\eqref{eq:u2}, the splitting~\eqref{eq:StrangfDfcn} is significantly more accurate than the splitting~\eqref{eq:StrangfDf}. In particular, using the Crank-Nicolson method to solve the diffusion part allows to increase the precision by a factor close to 1000 for the smallest time step $\tau=3.125\cdot 10^{-4}$.
\subsection{Stationary problems}

\begin{figure}
\centering
\begin{subfigure}{.5\textwidth}
\centering
\includegraphics[width=\textwidth]{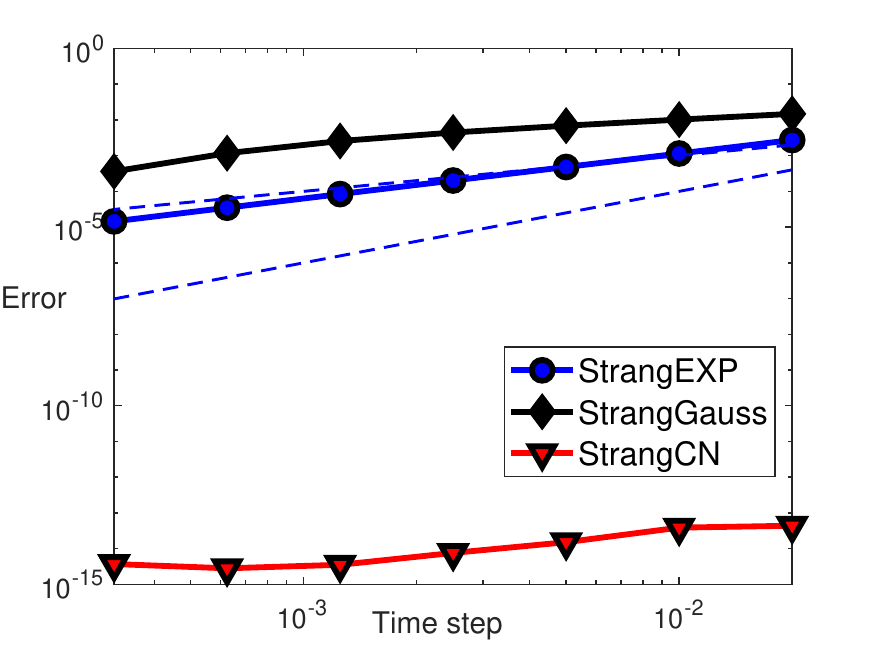} 
\caption{\textit{Error for a stationary problem}}\label{Fig:Stationary1}
\end{subfigure}%
\begin{subfigure}{.5\textwidth}
\centering
\includegraphics[width=\textwidth]{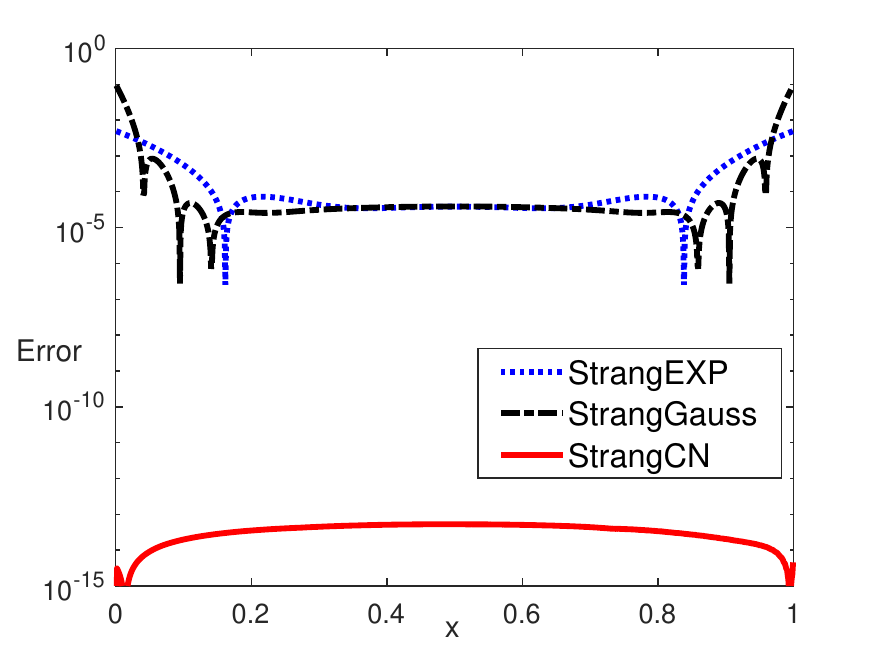}
\caption{\textit{$|u_n(x)-u(t_n,x)|$ for $\tau=10^{-2}$}}\label{Fig:Stationary2}
\end{subfigure}
\caption{\textit{The numerical solution given by the splitting~\eqref{eq:StrangfDfcn} has error close to machine precision for the stationary problem~\eqref{eq:stationary}. It contrast, for the splitting \emph{StrangEXP}~\eqref{eq:StrangfDf}, the error is closed to $10^{-5}$. When the two stage Gauss method is used to solve the diffusion equation~\eqref{eq:diffusion} instead of the Crank-Nicolson method, the error deteriorates. The problem considered is $ \partial_t u =\partial_{xx}u -1$ with $u_0=x^2/2$. Reference slopes one and two are drawn in dashed line in the left picture.}}\label{Fig:Stationary}
\end{figure}

In Figure~\ref{Fig:Stationary}, we consider the following stationary problem on $\Omega=(0,1)$, with $t\in [0,T]$, 
\begin{align}\label{eq:stationary}
\partial_t u(x,t) = \partial_{xx} u(x,t) - 1\quad \mathrm{in}\ (0,1), \qquad u(0,t)=0,\ u(1,t)=\frac{1}{2}, \qquad u(x,0)=\frac{x^2}{2}.
\end{align}
Since the problem is stationary ($u(x,t)=u(x,0)$), we take $u(x,t)=\frac{x^2}{2}$ as the reference solution. In Figure \ref{Fig:Stationary}, we observe that the error of the splitting~\eqref{eq:StrangfDfcn} is close to the machine precision ($10^{-15}$) and it does not decay for smaller time steps. In comparison, the error of the splitting~\eqref{eq:StrangDfD} is around $10^{-5}$ and it diminishes almost linearly when the time steps become smaller, i.e. the splitting~\eqref{eq:StrangDfD} suffers from order reduction down to one even for stationary problems. We observe the same phenomenon of order reduction for the two stage Gauss method with an error worse than the splitting~\eqref{eq:StrangfDf}. For the other Runge-Kutta methods considered previously, the error curves are nearly identical to the one of the splitting~\eqref{eq:StrangfDf} and they are not drawn for better readability.

\begin{figure}
\centering
\begin{subfigure}{.5\textwidth}
\centering
\includegraphics[width=\textwidth]{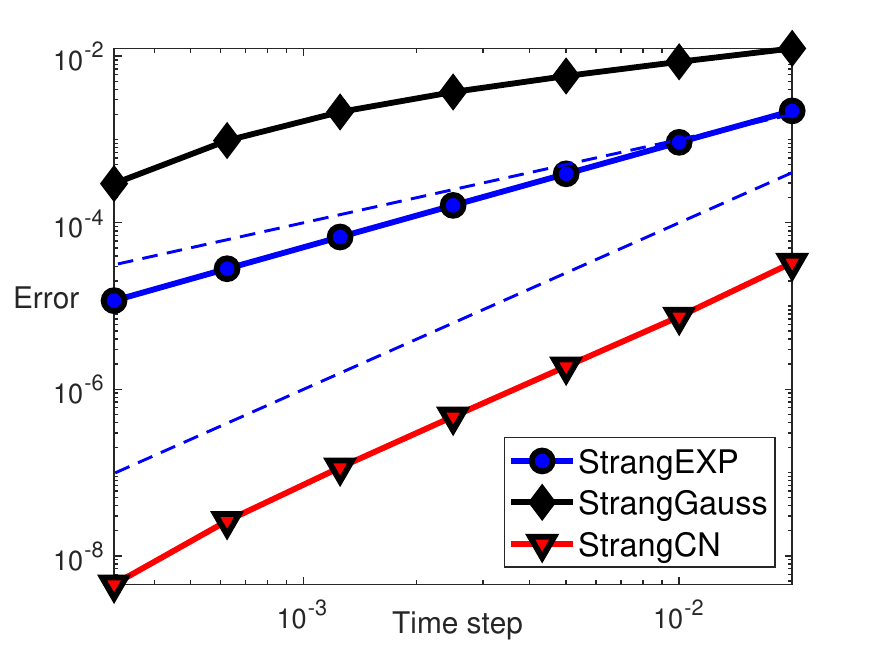} 
\caption{\textit{Error for a stationary problem with $f(u)=u$}}\label{Fig:Stationaryfu1}
\end{subfigure}%
\begin{subfigure}{.5\textwidth}
\centering
\includegraphics[width=\textwidth]{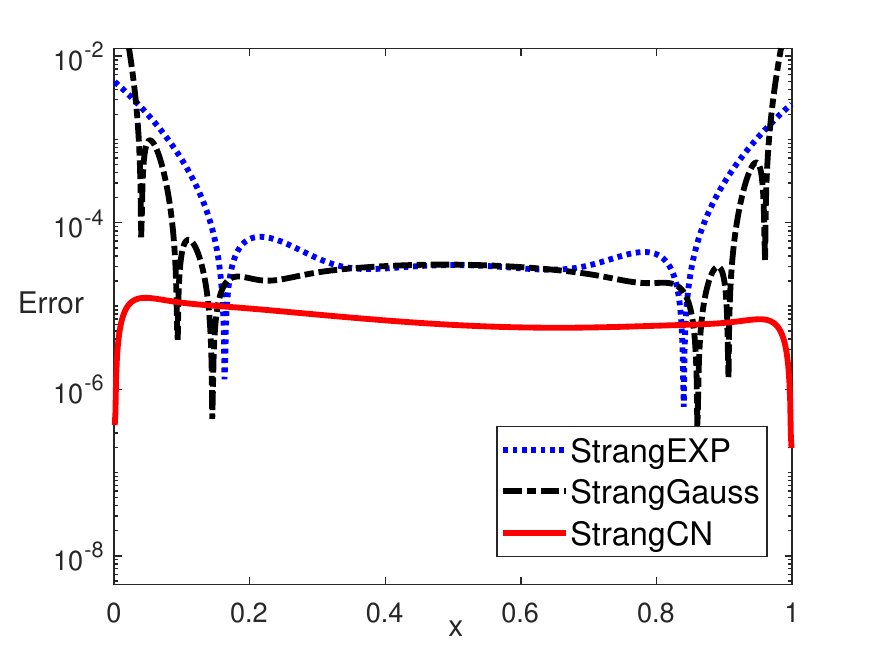}
\caption{\textit{$|u_n(x)-u(t_n,x)|$ for $\tau=10^{-2}$}}\label{Fig:Stationaryfu2}
\end{subfigure}
\caption{\textit{For a stationary problem with a source term that depends on the solution $u$, the splitting \emph{StrangCN} is not exact, in contrast with a problem where the source term f only depends on the space variable (see Figure~\ref{Fig:Stationary}). The parabolic problem considered is $\partial_t u = \partial_{xx}u+u,\ u_0=\cos(x)$, with inhomogeneous Dirichlet boundary conditions. Reference slopes one and two are given in dashed lines on the left picture. }}\label{Fig:Stationaryfu}
\end{figure}

In Figure~\ref{Fig:Stationaryfu}, we consider a stationary problem where the source term $f$ depends on the solution $u$, for $x\in [0,1]$ with $t\in [0,T]$, 
\begin{align}\label{eq:stationaryfu}
\partial_t u(x,t) = \partial_{xx} u(x,t) - u(x,t)\quad \mathrm{in}\ (0,1), \quad u(0,t)=1,\ u(1,t)=\cos(1), \quad u(x,0)=\cos(x).
\end{align}
The reference solution is $u(x,t)=\cos(x)$. We observe that the splitting \emph{StrangCN} is not exact in comparison with the previous stationary problem~\eqref{eq:stationary}, where $f$ only depends on the space variable. However, even for this problem where $f=f(u)$ that does not satisfy the hypotheses of Theorem~\ref{thm:fx}, we observe that the splitting $\emph{StrangCN}$ is second order convergent compared to the splittings $\emph{StrangEXP}$ and $\emph{StrangGauss}$, which have a reduced order of convergence between one and two. As seen in Figure~\ref{Fig:Stationaryfu2}, the loss of accuracy is mostly due to the large error made near the boundary of the domain. For better readability, we did not draw  the convergence curves for \emph{StrangLobatto} and \emph{StrangRadau} since they are superposed to the convergence curve of \emph{StrangEXP}.

We also observed numerically that the convergence analysis presented in this paper does not persist for dispersive problems, e.g. replacing $\partial_t u(x,t)$ by $i\partial_t u(x,t)$ formally in~\eqref{eq:parabolic1} to obtain a Schr\"odinger type problem (the convergence curves are not drawn for conciseness).
\paragraph{Acknowledgements.} This work was partially supported by the Swiss National Science
Foundation, grants No. 200020\_184614 and No. 200020\_178752. GB and GV would like to acknowledge
great hospitality when visiting Institut de Mathématiques de Toulouse
thanks to grant ANR project NABUCO, ANR-17-CE40-0025.

\appendix
\section{The Crank-Nicolson rational approximation of an analytic semigroup}\label{Appendix:A}
The aim of this Appendix is to prove Theorem~\ref{thm:hansbo}, which is a direct consequence of~\cite[Theorem 2.1]{Han99}. We present here a new direct and self contained proof of Theorem~\ref{thm:hansbo}. Similarly to~\cite{Han99}, the proof that we present holds for a general separable Banach space and not only for the special case $L^p(\Omega)$. This is useful in our context, if we consider a more abstract problem as described in Remark~\ref{rem:abstract}.

Let $X$ be a separable complex Banach space with norm $\|\cdot\|$. Let $A$ be a closed densely defined linear operator such that all eigenvalues are in a sector in the left half plane, or equivalently  for $\alpha\in (0,\frac{\pi}{2})$ the resolvent set $\rho(A)$ contains the set $\overline{\Sigma}_{\alpha}$:
\begin{equation}\label{eq:Sectorial1}
\rho(A)\supset \overline{\Sigma}_{\alpha}
\end{equation}
where $\Sigma_{\alpha}=\{z\in \mathbb{C}\ ;\ z\neq 0,\ |\mathrm{arg}(z)|<\pi-\alpha\}$.
Assume that for all $z\in \Sigma_{\alpha}$, the resolvent of $A$, $R(z, A)=(zI-A)^{-1}$, satisfies the following bound for the operator norm,
\begin{equation}\label{eq:Sectorial2}
\|R(z,A)\|\leq \frac{M}{|z|},
\end{equation}
where $M\geq 1$. Under those assumptions, the operator $A$ is the infinitesimal generator of an analytic semigroup given by integral formula (see~\cite[Definition I.3.4]{En00}),
\begin{align}\label{eq : ExpA}
\e^{t A}=\frac{1}{2\pi i}\int_\Gamma \e^{zt}R(z,A)\mathrm{dz},
\end{align}
with $t\geq 0$ and where $\Gamma $ is the boundary of $\Sigma_{\alpha}$ with imaginary part increasing along $\Gamma$. As before $r(z)$ denotes the stability function of the Crank-Nicolson scheme,
$$
r(z)=\frac{1+\frac{z}{2}}{1-\frac{z}{2}}.
$$
The purpose of this Appendix is to prove the following theorem :
\begin{theorem}\label{thm:CN}
Let $u_0\in \mathcal{D}(A)$ and let $n\geq 3$, then
$$
\|(r(\tau A)^n-\e^{\tau n A})u_0\|\leq \frac{C\tau^2}{t_n}\|A u_0\|,
$$
with $C$ is a positive constant independent of $u_0$, $\tau$, $n$, and $t_n=n\tau$. 
\end{theorem}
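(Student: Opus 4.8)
The plan is to represent the operator difference through the holomorphic functional calculus and reduce the statement to a scalar contour estimate. Writing $u_0=A^{-1}(Au_0)$ and using that $A$ generates an analytic semigroup given by a Cauchy integral over $\Gamma$, the combined symbol $\frac{r(\tau z)^n-\e^{n\tau z}}{z}$ decays like $|z|^{-1}$ along $\Gamma$ (the extra factor $z^{-1}$ coming precisely from the $A^{-1}$, i.e. from the assumption $u_0\in\mathcal{D}(A)$), so that together with the resolvent bound \eqref{eq:Sectorial2} the integrand is $O(|z|^{-2})$ and
$$
(r(\tau A)^n-\e^{n\tau A})A^{-1}=\frac{1}{2\pi i}\int_\Gamma \frac{r(\tau z)^n-\e^{n\tau z}}{z}\,R(z,A)\,dz
$$
is well defined (one must justify that this integral truly represents the operator, using that $2/\tau\in\rho(A)$ so that $r(\tau A)^n=(I+\tfrac{\tau}{2}A)^n(I-\tfrac{\tau}{2}A)^{-n}$ is defined). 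Applying \eqref{eq:Sectorial2} and the scaling $w=\tau z$, under which the sector is invariant so that $\Gamma$ is preserved, I would reduce the claim to
$$
\|(r(\tau A)^n-\e^{n\tau A})A^{-1}\|\le \frac{M\tau}{2\pi}\int_\Gamma \frac{|r(w)^n-\e^{nw}|}{|w|^2}\,|dw|=:\frac{M\tau}{2\pi}\,I_n,
$$
so it suffices to prove the scalar estimate $I_n\le C/n$, which yields $C\tau/n=C\tau^2/t_n$.

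Next I would collect the scalar ingredients on $\Gamma$, parametrizing each of the two rays by $s=|w|$ so that $\RE(w)=-s\cos\alpha$ and $|\e^{w}|=\e^{-s\cos\alpha}$. A direct computation gives
$$
|r(w)|^2=1-\frac{2s\cos\alpha}{1+s\cos\alpha+s^2/4}\le 1,
$$
confirming A-stability, and via $1-x\le\e^{-x}$ the quantitative bound $|r(w)|\le\exp\!\big(-\tfrac{cs}{1+s+s^2}\big)$ for some $c>0$. From the order-two consistency of Crank–Nicolson I record $|r(w)-\e^{w}|\le C\min(s^3,1)$ (cubic near the origin, and bounded for large $s$ since both $|r(w)|$ and $|\e^w|$ are $\le1$ on the closed left half plane). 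Finally I use the telescoping identity $r(w)^n-\e^{nw}=\sum_{k=0}^{n-1}r(w)^{n-1-k}(r(w)-\e^{w})\e^{kw}$.

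I then split $I_n=2\int_0^\infty \frac{|r(w)^n-\e^{nw}|}{s^2}\,ds$ at $s=1$. For $s\le1$ both $|r(w)|$ and $|\e^w|$ are bounded by $\e^{-c's}$, so telescoping yields $|r(w)^n-\e^{nw}|\le Cn\,s^3\,\e^{-c'(n-1)s}$ and hence $\int_0^1\frac{Cn\,s^3}{s^2}\e^{-c'(n-1)s}\,ds\le Cn\int_0^\infty s\,\e^{-c'(n-1)s}\,ds=\frac{Cn}{c'^2(n-1)^2}=O(1/n)$. For $s\ge1$ the telescoping bound fails, so I instead estimate the two terms separately: using $1+s+s^2\le 3s^2$ gives $|r(w)^n-\e^{nw}|\le|r(w)|^n+|\e^{nw}|\le\exp\!\big(-\tfrac{cn}{3s}\big)+\e^{-ns\cos\alpha}$, and the substitution $u=1/s$ turns the first term into $\int_0^1\exp\!\big(-\tfrac{cn}{3}u\big)\,du=O(1/n)$ while the second is trivially $O(\e^{-nc})$. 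Adding the two regions gives $I_n\le C/n$ (the hypothesis $n\ge3$ only serving to keep $n-1$ comparable to $n$), completing the proof.

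The main obstacle is the high-frequency region $s\gtrsim1$, i.e. $|z|\gtrsim1/\tau$. Because Crank–Nicolson is A-stable but not L-stable, $|r(w)|\to1$ as $s\to\infty$, the powers $r(w)^n$ do not decay geometrically, and the naive telescoping estimate produces an uncontrolled factor $n$ there. The resolution is to use the sharp rate $|r(w)|\le\exp(-c/(3s))$ together with the weight $s^{-2}$ supplied jointly by the resolvent bound and by the $A^{-1}$ coming from $u_0\in\mathcal{D}(A)$; the change of variables $u=1/s$ then converts this into an exponentially small integral of size $O(1/n)$. A secondary technical point, flagged above, is to rigorously justify the contour representation despite the scalar symbol $r(\tau z)^n$ tending to $(-1)^n$ rather than $0$ at infinity; this is handled by working throughout with the difference $r(\tau A)^n-\e^{n\tau A}$ composed with $A^{-1}$, whose symbol decays like $|z|^{-1}$.
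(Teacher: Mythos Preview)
Your argument is correct and follows a genuinely different route from the paper. The paper avoids handling the non--L-stable symbol $r(\tau z)^n$ directly: it factors $r(z)=r_0(z/2)(1+\tfrac{z}{2})$ with $r_0(\zeta)=(1-\zeta)^{-1}$ the implicit-Euler stability function, and first proves an auxiliary estimate (Proposition~\ref{prop:compositionCN}) for the composite symbols $r(z)^{n-2}r_0(z/2)^2$ and $r(z)^{n-1}r_0(z/2)$, which \emph{do} vanish at infinity so that the Dunford calculus of \cite[Thm.~VII.9.4]{Dun88} applies without caveat; the needed decay at large $|z|$ is supplied by $|r_0(z/2)|\le 2/|z|$. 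It then recovers $r(\tau A)^n u_0$ by peeling off the factors $(I+\tfrac{\tau}{2}A)$, which is where the $\|Au_0\|$ on the right-hand side appears. Your approach obtains the same two powers of $|z|^{-1}$ in the high-frequency region from a different source---one from the resolvent bound~\eqref{eq:Sectorial2} and one from the $A^{-1}$ inserted via $u_0=A^{-1}(Au_0)$---and then exploits the same scalar ingredient $|r(w)|\le\e^{-c/s}$ for $s\ge1$ that the paper records in~\eqref{lemma : 3}. Your route is more direct (no auxiliary scheme) and in fact only needs $n\ge2$; the price is the functional-calculus justification you correctly flag, since $(r(\tau z)^n-\e^{n\tau z})/z$ has an essential singularity at $\infty$ and one cannot invoke \cite{Dun88} as stated. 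This can be handled, e.g.\ by writing $r(\tau A)^n=(-1)^nI+\frac{1}{2\pi i}\int_\Gamma(r(\tau z)^n-(-1)^n)R(z,A)\,\mathrm{d}z$ (now a rational symbol vanishing at $\infty$) and treating $\e^{n\tau A}$ by the standard Cauchy formula~\eqref{eq : ExpA}, then recombining; the removable singularity of your $g$ at $z=0$ together with $0\in\rho(A)$ makes this legitimate. The paper's detour trades that small analytic point for a cleaner calculus and yields, as a by-product, the sharper $O(\tau^2/t_n^2)$ bound for the Euler-smoothed scheme.
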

In the following lemma, we give some estimates for of $r(z)$. The proof is inspired from~\cite{CroPoly}.
\begin{lemma}
\begin{enumerate}
\item For all $z\in \mathbb{C}^-$ with $|z|\leq 1$, we have
\begin{equation}\label{lemma : 2}
\left|r(z)-\e^{z} \right|\leq \frac{5}{12}|z|^3.
\end{equation}
\item 
For all $z \in \C^-$, 
\begin{equation}\label{lemma : 3}
|r(z)|\leq
\max\big(e^{\frac45 \Re (z)},e^{\frac4{5\Re(z)}}\big)
\end{equation}
\item For all $y\in\mathbb{R^+}$ and for all integer $k\geq 1$, there exists $C_k$ independent of $y$ such that
\begin{equation}\label{lemma : 6}
\int_1^\infty \e^{-\frac{y}{x}}\frac{y^{k}}{x^{k+1}}\mathrm{dx}\leq C_k.
\end{equation}
\item For all $y\in\mathbb{R^+}$ and for all integer $k\geq 1$, there exists $C_k$ independent of $y$ such that
\begin{equation}\label{lemma : 7}
\int_0^1 \e^{-yx}y^{k+1}x^k\mathrm{dx}\leq C_k.
\end{equation}
\end{enumerate}
\end{lemma}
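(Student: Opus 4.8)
The four bounds are essentially independent, so I would prove them one at a time; only \eqref{lemma : 3} requires genuine work.

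For \eqref{lemma : 2} the plan is to exploit that $r$ is the $(1,1)$-Padé approximant of the exponential, so that the defect is cubic. I would clear the denominator and study the entire function $g(z):=\bigl(1-\tfrac z2\bigr)\bigl(r(z)-\e^z\bigr)=1+\tfrac z2-\bigl(1-\tfrac z2\bigr)\e^z$. A direct power-series computation gives $g(z)=\sum_{k\ge3}\tfrac{k-2}{2\,k!}z^k$, the coefficients of $z^0,z^1,z^2$ cancelling exactly (this cancellation is precisely the second-order consistency of Crank--Nicolson). For $|z|\le1$ this yields $|g(z)|\le|z|^3\sum_{k\ge3}\tfrac{k-2}{2\,k!}=\tfrac{3-\e}{2}|z|^3$. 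Since $z\in\C^-$ gives $\Re\bigl(1-\tfrac z2\bigr)\ge1$ and hence $\bigl|1-\tfrac z2\bigr|\ge1$, I conclude $|r(z)-\e^z|=|g(z)|/\bigl|1-\tfrac z2\bigr|\le\tfrac{3-\e}{2}|z|^3\le\tfrac5{12}|z|^3$, the last step being the numerical inequality $\tfrac{3-\e}{2}<\tfrac5{12}$.

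The estimate \eqref{lemma : 3} is where the real work lies. Writing $z=x+iy$ with $x=\Re(z)\le0$, I would begin from the exact identity $|r(z)|^2=\dfrac{(2+x)^2+y^2}{(2-x)^2+y^2}$ and split the analysis into the two regions $|z|\le1$ and $|z|\ge1$, matching the two arguments of the maximum; on the first I would bound $\log|r(z)|$ by the linear rate $\tfrac45 x$, and on the second by the reciprocal rate $\tfrac4{5x}$. The explicit $y$-dependence above is monotone in $y^2$ (the $y$-derivative of $|r(z)|^2$ carries the sign of $(2-x)^2-(2+x)^2=-8x\ge0$), and I would use this structure to reduce each regime to a scalar inequality in $x$, verified by elementary calculus following~\cite{CroPoly}. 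The sensitive points — and the main obstacle of the lemma — are that the constant $\tfrac45$ renders these scalar comparisons essentially sharp, and that the two exponential rates must be matched consistently across the crossover $|z|=1$.

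Finally, \eqref{lemma : 6} and \eqref{lemma : 7} are pure substitutions reducing to the Gamma integral. For \eqref{lemma : 6} I would set $u=y/x$, so that $\mathrm{dx}=-y\,u^{-2}\,\mathrm{du}$ and the integrand collapses to $\e^{-u}u^{k-1}$ while the range $x\in[1,\infty)$ becomes $u\in(0,y]$; hence the integral equals $\int_0^y\e^{-u}u^{k-1}\,\mathrm{du}\le\int_0^\infty\e^{-u}u^{k-1}\,\mathrm{du}=(k-1)!$, giving $C_k=(k-1)!$ independent of $y$. For \eqref{lemma : 7} the substitution $u=yx$ turns the integrand into $\e^{-u}u^{k}$ over $u\in[0,y]$, so the integral is at most $\int_0^\infty\e^{-u}u^{k}\,\mathrm{du}=k!=:C_k$. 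Both bounds are then uniform in $y$, as required.
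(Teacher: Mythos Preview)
Your treatments of \eqref{lemma : 2}, \eqref{lemma : 6}, and \eqref{lemma : 7} are correct and in fact cleaner than the paper's. For \eqref{lemma : 2} the paper writes $r(z)-\e^z=\tfrac{z^3}{4(1-z/2)}-z^3\int_0^1\e^{(1-s)z}\tfrac{s^2}{2}\,\mathrm{ds}$ and bounds each piece separately, arriving at exactly $\tfrac14+\tfrac16=\tfrac5{12}$; your power-series route through $g(z)=(1-\tfrac z2)(r(z)-\e^z)$ is different and even yields the sharper constant $\tfrac{3-\e}{2}$. For \eqref{lemma : 6} and \eqref{lemma : 7} the paper proceeds by induction on $k$ via integration by parts, whereas your substitutions $u=y/x$ and $u=yx$ collapse both integrals to incomplete Gamma functions and deliver the explicit constants $(k-1)!$ and $k!$ in one line. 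Both routes are fine; yours is more direct.

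The gap is in your sketch of \eqref{lemma : 3}. You correctly note that $|r(x+iy)|^2$ is monotone \emph{increasing} in $y^2$ for fixed $x\le0$, with limit $1$ as $|y|\to\infty$. On the disc $|z|\le1$ this is indeed useful: the worst case is $y^2=1-x^2$, and the problem reduces to the scalar inequality $\tfrac{5+4x}{5-4x}\le\e^{8x/5}$ on $[-1,0]$, which does hold. But on $|z|\ge1$ the same monotonicity is fatal rather than helpful: for any fixed $x\in(-1,0)$ one may send $|y|\to\infty$ while staying in the region, so $|r(z)|\to1$ while both $\e^{4x/5}$ and $\e^{4/(5x)}$ remain strictly below $1$. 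There is thus no way to reduce the $|z|\ge1$ regime to a scalar inequality in $x$ alone. The paper's proof avoids Cartesian coordinates entirely: it works with $z=-\rho\e^{\pm i\alpha}$, first shows $r(-x)\le\e^{-x}$ on $\R^+$, then for $\rho\le1$ compares $|r(z)|^2$ algebraically with $r\bigl(-\tfrac{4}{5}\rho\cos\alpha\bigr)^2$, and for $\rho\ge1$ invokes the inversion identity $|r(z)|=|r(4/z)|$ (coming from $r(4/z)=-r(z)$), which maps $\rho\mapsto 4/\rho\le4$ and reduces to a compact-range estimate of the same algebraic type. This inversion step is the missing ingredient in your outline; without it the large-$|z|$ case cannot be closed by your monotonicity argument.
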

\begin{proof}
\begin{enumerate}
\item Since $z\neq-\frac{1}{2}$, we have
\begin{align*}
r(z)-\e^{z}&=\frac{z^3}{4}\frac{1}{1-\frac{z}{2}}-z^3\int_0^1 \e^{(1-s)z}\frac{s^2}{2}\mathrm{ds}.
\end{align*}
Therefore, for $z\in \mathbb{C}^-$ and $|z|\leq 1$, we obtain
$$
\left|r(z)-\e^{z}\right|\leq |z^3|\left(\frac{1}{4}+\int_0^1\frac{s^2}{2}\mathrm{ds}\right)=\frac{5}{12}|z^3|.
$$
\item We first show that 
\begin{equation}\label{eq:rminusx}
r(-x)\leq \e^{-x}
\end{equation} 
for $x\in \mathbb{R}^+$. Since $\e^{-x}$ and $r(-x)$ are both equal to $1$ for $x=0$, it suffices to show that $\frac{\mathrm{d}}{\mathrm{dx}}r(-x)\leq \frac{\mathrm{d}}{\mathrm{dx}}\e^{-x}$ for $x\geq 0$. A calulation yields $\frac{\mathrm{d}}{\mathrm{dx}}r(-x)=\frac{-1}{(1+\frac{x}{2})^2}\leq -\e^{-x}=\frac{\mathrm{d}}{\mathrm{dx}}\e^{-x}.$
For $\alpha\in (0,\frac{\pi}{2})$ and $0 \leq \rho\leq 1$, we have  
\begin{align*}
\left|r(-\rho\e^{\pm i\alpha})\right|^2=\frac{1+\frac{\rho^2}{4}-\rho\cos(\alpha)}{1+\frac{\rho^2}{4}+\rho\cos(\alpha)}\leq\left(\frac{1-\frac{2\rho}{5}\cos(\alpha)}{1+\frac{2\rho}{5}\cos(\alpha)}\right)^2=r(-\frac{4\rho}{5}\cos(\alpha))^2.
\end{align*}
Indeed, we {observe that}
\begin{align*}
&\frac{1+\frac{\rho^2}{4}-\rho\cos(\alpha)}{1+\frac{\rho^2}{4}+\rho\cos(\alpha)}\leq\left(\frac{1-\frac{4\rho}{5}\cos(\alpha)}{1+\frac{4\rho}{5}\cos(\alpha)}\right)^2\Leftrightarrow \frac{\frac{8}{5}+\frac{2\rho^2}{5}}{2+\frac{8\rho^2}{25}\cos(\alpha)^2}\cos(\alpha)\leq \cos(\alpha)
\end{align*}
and the last inequality is true since for $0\leq\rho\leq 1$ and $0\leq \cos(\alpha)\leq 1$, we deduce
$$
\frac{\frac{8}{5}+\frac{2\rho^2}{5}}{2+\frac{8\rho^2}{25}\cos(\alpha)^2}\leq \frac{\frac{8}{5}+\frac{2}{5}}{2}\leq 1.
$$
Using~\eqref{eq:rminusx}, we obtain
\begin{equation}\label{eq:rhoinf1}
\left|r(-\rho\e^{\pm i\alpha})\right|\leq \e^{-\frac{4\rho}{5}\cos(\alpha)}.
\end{equation}
If $z=-\rho\e^{\pm i\alpha}$, with $\alpha\in (0,\frac{\pi}{2})$ and $\rho\geq 1$, we start to observe that
$$
\left|r(z)\right|=\left|\frac{1+\frac{4}{2z}}{1-\frac{4}{2z}}\right|=\left|r\left(\frac{4}{z}\right)\right|.
$$
Since $\mathrm{arg}(\frac{4}{z})=-\mathrm{arg}(z)$, we write $\frac{4}{z}=\eta \e^{\mp i \alpha}$ with $\eta=\frac{4}{\rho}$. Since $\rho\geq 1$, we deduce $0<\eta\leq 4$.
We observe that
$$
\frac{1+\frac{\eta^2}{4}-\eta\cos(\alpha)}{1+\frac{\eta^2}{4}+\eta\cos(\alpha)}\leq\left(\frac{1-\frac{\eta}{10}\cos(\alpha)}{1+\frac{\eta}{10}\cos(\alpha)}\right)^2=r(-\frac{\eta}{5}\cos(\alpha))^2.
$$
Indeed, we have
\begin{align*}
&\frac{1+\frac{\eta^2}{4}-\eta\cos(\alpha)}{1+\frac{\eta^2}{4}+\eta\cos(\alpha)}\leq\left(\frac{1-\frac{\eta}{10}\cos(\alpha)}{1+\frac{\eta}{10}\cos(\alpha)}\right)^2
\Leftrightarrow \frac{\frac{2}{5}+\frac{\eta^2}{10}}{2+\frac{\eta^2}{50}\cos(\alpha)^2}\cos(\alpha)\leq \cos(\alpha)
\end{align*}
and the last inequality is true since for $0<\eta\leq 4$ and $0\leq \cos(\alpha)\leq 1$, we deduce
$$
 \frac{\frac{2}{5}+\frac{\eta^2}{10}}{2+\frac{\eta^2}{50}\cos(\alpha)^2}\leq \frac{\frac{2}{5}+\frac{16}{10}}{2}=\frac{20}{20}=1.
$$
Using~\eqref{eq:rminusx}, we obtain
$$
\left|r(-\eta\e^{\mp i\alpha})\right|\leq \e^{-\frac{\eta}{5}\cos(\alpha)}.
$$
This gives us for $\rho=\frac{4}{\eta}$,
$$
\left|r(-\rho\e^{\pm i\alpha})\right|\leq \e^{-\frac{4}{5\rho}\cos(\alpha)}.
$$
Together with~\eqref{eq:rhoinf1}, this concludes the proof.
\item The proof is made by induction.
For $k=1$, we obtain,
$$
\int_1^\infty \e^{-\frac{y}{x}}\frac{y}{x^2}\mathrm{dx}=\left.\e^{-\frac{y}{x}}\right|_1^\infty=1-\e^{-y}\leq 1.
$$
Assuming the result is true for $k$, we obtain
$$
\int_1^\infty \e^{-\frac{y}{x}}\frac{y^k}{x^{k+1}}\mathrm{dx}=\left.\e^{-\frac{y}{x}}\frac{y^{k-1}}{x^{k-1}}\right|_1^\infty+(k-1)\int_1^\infty\e^{-\frac{y}{x}}\frac{y^{k-1}}{x^{k}}\mathrm{dx}=-\e^{-y}y^{k-1}+C\leq C,
$$
where we used $\e^{-y}y^{k-1}\leq C$, with $C$ independent of $y$.
\item With the change of variable $\hat{x}=\frac{1}{x}$, we observe it is a direct consequence of the previous inequality~\eqref{lemma : 6}.
\end{enumerate}
\end{proof}
We introduce the following notation for the stability function of the implicit Euler method,
\begin{align*}
r_0(z)&=\frac{1}{1-z}.
\end{align*}
The rational approximation $r_0$ satisfy the following inequalities.
\begin{lemma}\label{lemma:IMP}
Let $z\in \mathbb{C^-}=\{z\in\mathbb{C}\ ;\ \Re(z)\leq 0 \}$ with $|z|\leq 1$, then 
$$
\left| r_0\left(\frac{z}{2}\right)-\e^{\frac{z}{2}}\right|\leq\frac{3}{8}|z|^2\quad \mbox{and}\quad
\left| r_0\left(\frac{z}{2}\right)^2-\e^{z}\right|\leq \frac{6}{8}|z|^2.
$$
\end{lemma}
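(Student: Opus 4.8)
The plan is to reduce both estimates to the two elementary bounds valid on $\C^-$, namely $|1-\tfrac{z}{2}|\geq 1$ and $|\e^{sz}|\leq 1$ for $s\geq 0$, exactly in the spirit of part (1) of the preceding lemma. For the first inequality I would expand both approximations so that their constant and linear terms cancel. Writing the geometric remainder $r_0(\tfrac{z}{2})=\tfrac{1}{1-z/2}=1+\tfrac{z}{2}+\tfrac{(z/2)^2}{1-z/2}$ and the Taylor integral remainder $\e^{z/2}=1+\tfrac{z}{2}+(\tfrac{z}{2})^2\int_0^1 s\,\e^{(1-s)z/2}\,\mathrm{ds}$, subtraction gives
\begin{equation*}
r_0\!\left(\tfrac{z}{2}\right)-\e^{z/2}=\left(\tfrac{z}{2}\right)^2\left(\frac{1}{1-\tfrac{z}{2}}-\int_0^1 s\,\e^{(1-s)z/2}\,\mathrm{ds}\right).
\end{equation*}

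Then I would estimate the parenthesis term by term. Since $\Re(z)\leq 0$ we have $\Re(1-\tfrac{z}{2})\geq 1$, hence $|1-\tfrac{z}{2}|\geq 1$ and $|\tfrac{1}{1-z/2}|\leq 1$; likewise $\Re((1-s)\tfrac{z}{2})\leq 0$ gives $|\e^{(1-s)z/2}|\leq 1$, so that $|\int_0^1 s\,\e^{(1-s)z/2}\,\mathrm{ds}|\leq\int_0^1 s\,\mathrm{ds}=\tfrac12$. With $|(\tfrac{z}{2})^2|=\tfrac{|z|^2}{4}$ this yields $|r_0(\tfrac{z}{2})-\e^{z/2}|\leq\tfrac{|z|^2}{4}\,(1+\tfrac12)=\tfrac38|z|^2$, which is precisely the claimed constant.

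For the second inequality I would avoid expanding $r_0(\tfrac{z}{2})^2$ directly and instead factor $r_0(\tfrac{z}{2})^2-\e^{z}=(r_0(\tfrac{z}{2})-\e^{z/2})(r_0(\tfrac{z}{2})+\e^{z/2})$, using $\e^{z}=\e^{z/2}\e^{z/2}$. The first factor is bounded by $\tfrac38|z|^2$ from the step above, while the same two elementary bounds give $|r_0(\tfrac{z}{2})|\leq 1$ and $|\e^{z/2}|=\e^{\Re(z)/2}\leq 1$, so the second factor is at most $2$. Multiplying produces $|r_0(\tfrac{z}{2})^2-\e^{z}|\leq 2\cdot\tfrac38|z|^2=\tfrac68|z|^2$, as required.

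I do not expect any genuine obstacle: once the first-order terms are matched the argument is routine, and the hypothesis $|z|\leq 1$ is in fact not needed for either bound (it only matters when the lemma is later applied, where the $|z|^2$ factor is useful for small $|z|$). The one point deserving care is that the prescribed constants $\tfrac38$ and $\tfrac68$ must come out exactly, which forces me to keep the split $1+\tfrac12=\tfrac32$ sharp and to use the clean estimate $|1-\tfrac{z}{2}|\geq 1$; the factor $2$ in the second bound is the exact supremum of $|r_0|+|\e^{\cdot}|$ on $\C^-$.
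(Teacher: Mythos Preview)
Your proof is correct and follows essentially the same approach as the paper. For the first inequality both you and the paper subtract the matching second-order expansions of $r_0(z/2)$ and $\e^{z/2}$ and bound the remainder using $|1-z/2|\geq 1$ and $|\e^{(1-s)z/2}|\leq 1$; for the second inequality your factorization $(r_0-\e^{z/2})(r_0+\e^{z/2})$ and the paper's telescoping $r_0(r_0-\e^{z/2})+(r_0-\e^{z/2})\e^{z/2}$ are algebraically identical and yield the same bound $2\cdot\tfrac38|z|^2$. Your remark that $|z|\leq 1$ is not actually used is also correct.
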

\begin{proof}
Since $z\neq-\frac{1}{2}$, we have
\begin{align*}
r_0\left(\frac{z}{2}\right)-\e^{\frac{z}{2}}&=\frac{z^2}{4(1-\frac{z}{2})}-\frac{z^2}{4}\int_0^1 \e^{(1-s)z}s\mathrm{ds}.
\end{align*}
Therefore, for $z\in \mathbb{C}^-$ and $|z|\leq 1$, we obtain
$$
\left| r_0\left(\frac{z}{2}\right)-\e^{\frac{z}{2}}\right|\leq \frac{|z^2|}{4}\left(1+\int_0^1 s\mathrm{ds}\right)=\frac{3}{8}|z^2|.
$$
For the second inequality we observe that
$$
\left|r_0\left(\frac{z}{2}\right)^2-\e^{z}\right|=\left|r_0\left(\frac{z}{2}\right)\left(r_0\left(\frac{z}{2}\right)-\e^{\frac{z}{2}}\right)+\left(r_0\left(\frac{z}{2}\right)-\e^{\frac{z}{2}}\right)\e^{\frac{z}{2}}\right|\leq \frac{6}{8}|z^2|,
$$
which concludes the proof.
\end{proof}
We define $F_n$ and $G_n$ as follows,
\begin{align*}
F_{n}(z)=r(z)^{n-2}r_0\left(\frac{z}{2}\right)^2-\e^{\tau (n-1)z}\quad \mbox{and}\quad G_{n}(z)=r(z)^{n-1}r_0\left(\frac{z}{2}\right)-\e^{\tau (n-\frac{1}{2})z}.
\end{align*}
The term $r(z)^{n-2}r_0\left(\frac{z}{2}\right)^j$, $j=1,2$ correspond to the stability function of the composition of the Crank-Nicolson scheme with respectively one or two half steps of the Euler implicit scheme. This provides higher regularity for the solution since $r_0\left(\frac{\tau A}{2}\right)^j:\mathcal{D}(A^k)\rightarrow \mathcal{D}(A^{k+j})$ for $k$ any integer. For more general results on composition of rational approximation, see~\cite{Han99}.
\begin{lemma}
Let $n\geq 2$. Then $F_n(A)$ and $G_n(A)$ satisfy the following integral formula
$$
F_n(A)=\frac{1}{2\pi i}\int_\Gamma F_n(z)R(z,A)\mathrm{dz},\quad\mbox{and}\quad
G_n(A)=\frac{1}{2\pi i}\int_\Gamma G_n(z)R(z,A)\mathrm{dz},
$$
where $\Gamma=\{z\in\mathbb{C}\ ; \ |\arg(z)|=\pi-\alpha\}$.
\end{lemma}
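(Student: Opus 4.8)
The plan is to prove both identities by the Dunford--Taylor holomorphic functional calculus for the sectorial operator $A$, splitting each of $F_n$ and $G_n$ into a rational part and an exponential part and representing the two pieces separately. I would first record the relevant structural facts. The rational parts $r(z)^{n-2}r_0(z/2)^2$ and $r(z)^{n-1}r_0(z/2)$ are holomorphic on the whole of $\overline{\Sigma}_\alpha$ except for a single pole at $z=2$; since $\arg(2)=0<\pi-\alpha$, this pole lies strictly inside $\Sigma_\alpha$, to the right of $\Gamma$, and is therefore separated by $\Gamma$ from the spectrum $\sigma(A)\subset\mathbb{C}\setminus\Sigma_\alpha$. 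Moreover these rational parts vanish at infinity, decaying like $z^{-2}$ for $F_n$ and like $z^{-1}$ for $G_n$. The exponential parts $\e^{\tau(n-1)z}$ and $\e^{\tau(n-\frac{1}{2})z}$ are entire, and since $\Re(z)<0$ on $\Gamma$ they decay exponentially along $\Gamma$.

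Before evaluating, I would verify that the integrands are integrable along $\Gamma$. At infinity this is immediate from the decay above together with the resolvent bound $\|R(z,A)\|\leq M/|z|$ of~\eqref{eq:Sectorial2}, which makes the full integrand decay at least like $|z|^{-2}$ on the two rays. Near the origin there is nothing to check: by~\eqref{eq:Sectorial1} one has $0\in\overline{\Sigma}_\alpha\subset\rho(A)$, so $R(\cdot,A)$ is holomorphic, hence bounded, in a neighbourhood of $z=0$, and both integrands remain bounded there. Consequently $\int_\Gamma F_n(z)R(z,A)\,\mathrm{dz}$ and $\int_\Gamma G_n(z)R(z,A)\,\mathrm{dz}$ converge absolutely, and by linearity each splits into a rational and an exponential contribution.

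For the exponential contributions I would invoke the integral representation~\eqref{eq : ExpA} of the analytic semigroup directly, noticing that its contour is precisely $\Gamma=\partial\Sigma_\alpha$ oriented with increasing imaginary part, and that the exponents $\tau(n-1)$ and $\tau(n-\frac{1}{2})$ are strictly positive because $n\geq 2$; this yields $\frac{1}{2\pi i}\int_\Gamma\e^{\tau(n-1)z}R(z,A)\,\mathrm{dz}=\e^{\tau(n-1)A}$ and the analogous identity for $G_n$. For the rational contributions I would close $\Gamma$ to the right by a large circular arc $C_\rho\subset\Sigma_\alpha$; the decay estimates show that the arc integral tends to $0$ as $\rho\to\infty$. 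The resulting closed contour encircles only the pole at $z=2$, where $R(\cdot,A)$ is holomorphic, so the residue theorem applied with the (clockwise) orientation that $\Gamma$ induces on the right region reproduces exactly the rational part of $F_n(A)$ (resp. $G_n(A)$) given by the resolvent calculus. Adding the two contributions gives the claimed formulas.

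The bulk of this is routine bookkeeping, and the only steps needing genuine care concern the rational contribution. First, the orientation of $\Gamma$ relative to the single pole at $z=2$ must be tracked carefully so that the residue is picked up with the correct sign. Second, in the $G_n$ case the decay is only borderline: the integrand is $\bigo(|z|^{-2})$ and the arc contribution is $\bigo(\rho^{-1})$, so the vanishing of the closing arc must be justified rather than merely asserted. I expect this residue-and-orientation step, together with the borderline arc estimate for $G_n$, to be the main obstacle, all the remaining ingredients following directly from~\eqref{eq : ExpA}, \eqref{eq:Sectorial1} and~\eqref{eq:Sectorial2}.
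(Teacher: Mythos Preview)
Your proposal is correct and follows essentially the same approach as the paper: split $F_n$ and $G_n$ into their rational and exponential parts, handle the exponential part directly via the semigroup representation~\eqref{eq : ExpA}, and handle the rational part via the holomorphic functional calculus. The only difference is one of exposition: where the paper simply observes that the rational part is holomorphic near $\sigma(A)$ and vanishes at infinity and then cites the Dunford--Schwartz functional calculus, you re-derive that special case by closing $\Gamma$ to the right and computing the residue at the single pole $z=2$, which is a valid (and more self-contained) route to the same conclusion.
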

\begin{proof}
The rational function $r(z)^{n-2}r_0\left(\frac{z}{2}\right)^2$ is holomorphic in a neighbourhood of the spectrum of $A$ and it vanishes at infinity . Therefore (see~\cite[Theorem VII.9.4]{Dun88}), we have 
$$
r(A)^{n-2}r_0\left(\frac{A}{2}\right)^2=\frac{1}{2\pi i}\int_\Gamma r(z)^{n-2}r_0\left(\frac{z}{2}\right)^2R(z,A)\mathrm{dz}.
$$ 
We conclude the proof using~\eqref{eq : ExpA}. The proof for $G_n(A)$ is similar and thus omitted.
\end{proof}
The proof that follows is inspired from~\cite[Theorem 9.3]{Tho06}.
\begin{proposition}\label{prop:compositionCN}
For $n\geq 3$ and $u_0\in X$, we have
\begin{align}\label{eq : rr02}
\left\|r(\tau A)^{n-2}r_0\left(\frac{\tau A}{2}\right)^2u_0-\e^{\tau (n-1) A}u_0\right\|\leq C\frac{\tau^2}{t_n^2}\|u_0\|
\end{align}
and
\begin{align}\label{eq : rr0}
\left\|r(\tau A)^{n-1}r_0\left(\frac{\tau A}{2}\right)u_0-\e^{\tau (n-\frac{1}{2}) A}u_0\right\|\leq C\frac{\tau}{t_n}\|u_0\|,
\end{align}
where $C$ is a constant independent of $u_0$, $\tau$, and $n$.
\end{proposition}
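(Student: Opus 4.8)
The plan is to work from the two contour integral representations just established and to estimate the scalar integrands pointwise along $\Gamma$, bounding the resolvent by $\|R(z,A)\|\leq M/|z|$ from~\eqref{eq:Sectorial2}. Parametrising $\Gamma$ by its two rays $z=\rho\e^{\pm i(\pi-\alpha)}$ and rescaling the spectral variable through $w=\tau z$, both claims reduce to the two scalar estimates
\[
\int_0^\infty\frac{1}{s}\bigl|r(w)^{n-2}r_0(w/2)^2-\e^{(n-1)w}\bigr|\,\mathrm{ds}\leq \frac{C}{n^2},\qquad
\int_0^\infty\frac{1}{s}\bigl|r(w)^{n-1}r_0(w/2)-\e^{(n-\frac{1}{2})w}\bigr|\,\mathrm{ds}\leq \frac{C}{n},
\]
where $w=s\e^{i(\pi-\alpha)}$ and $s=|w|$; the announced factors $\tau^2/t_n^2=1/n^2$ and $\tau/t_n=1/n$ then appear automatically, since after rescaling the integrals depend only on $n$ and $\alpha$. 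I would split each integral at $s=1$, i.e.\ at $|\tau z|=1$, into an inner and an outer region.

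On the inner region $s\leq 1$ I would use the exact algebraic identity
\[
r(w)^{n-2}r_0(w/2)^2-\e^{(n-1)w}=r(w)^{n-2}\bigl[r_0(w/2)^2-\e^{w}\bigr]+\e^{w}\bigl[r(w)^{n-2}-\e^{(n-2)w}\bigr]
\]
and its analogue for the second integrand, with brackets $r_0(w/2)-\e^{w/2}$ and $r(w)^{n-1}-\e^{(n-1)w}$. The first bracket is controlled by Lemma~\ref{lemma:IMP}, giving $|r_0(w/2)^2-\e^{w}|\leq\frac{6}{8}|w|^2$ and $|r_0(w/2)-\e^{w/2}|\leq\frac{3}{8}|w|^2$, and the second by the telescoping identity $r(w)^{m}-\e^{mw}=\sum_{j=0}^{m-1}r(w)^{j}(r(w)-\e^{w})\e^{(m-1-j)w}$ together with the consistency estimate~\eqref{lemma : 2}, $|r(w)-\e^{w}|\leq\frac{5}{12}|w|^3$. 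The essential input is the stability bound~\eqref{lemma : 3}, which on this region (where $|\Re w|\leq\cos\alpha\leq 1$) reads $|r(w)|\leq\e^{\frac{4}{5}\Re w}$; since $\frac{4}{5}<1$, every factor $|r(w)^{j}\e^{(m-1-j)w}|$ is dominated uniformly in $j$ by $\e^{\frac{4}{5}m\Re w}$. Integrating the resulting bounds $\int_0^1 s^{p}\e^{-c\,m\,s}\,\mathrm{ds}$, with $c=\frac{4}{5}\cos\alpha$ and $p=1,2$ (the $p=2$ contribution carrying an extra prefactor $m$ from the telescoping sum), then produces the required powers of $1/n$ (both inner contributions are in fact $O(1/n^2)$).

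On the outer region $s\geq 1$ the Taylor-type estimates no longer apply, since $r$ does not decay along $\Gamma$, and everything rests on the smoothing supplied by the implicit-Euler factors. Using~\eqref{lemma : 3} in the form $|r(w)|\leq\max(\e^{\frac{4}{5}\Re w},\e^{4/(5\Re w)})$ together with $|r_0(w/2)|\leq C/|w|$ (which follows from $\Re(1-w/2)\geq 1$, hence $|1-w/2|\geq 1+\frac{s\cos\alpha}{2}$), I would bound
\[
\bigl|r(w)^{n-2}r_0(w/2)^2\bigr|\leq\frac{C}{s^2}\,\e^{-y/s},\qquad
\bigl|r(w)^{n-1}r_0(w/2)\bigr|\leq\frac{C}{s}\,\e^{-y/s},
\]
with $y=\frac{4(n-2)}{5\cos\alpha}$ and $y=\frac{4(n-1)}{5\cos\alpha}$ respectively. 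The integrals $\int_1^\infty s^{-3}\e^{-y/s}\,\mathrm{ds}$ and $\int_1^\infty s^{-2}\e^{-y/s}\,\mathrm{ds}$ are then exactly of the form covered by~\eqref{lemma : 6} and yield $y^{-2}=O(1/n^2)$ and $y^{-1}=O(1/n)$; the complementary branch $\e^{\frac{4}{5}m\Re w}$ of the maximum contributes only exponentially small terms, as do the pure exponentials $\e^{(n-1)w}$ and $\e^{(n-\frac{1}{2})w}$.

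I expect the outer region to be the main obstacle: without the regularising factors $r_0(w/2)^j$ the contour integral would not even converge for data $u_0\in X$, and it is precisely the interplay between the non-decaying bound $|r(w)|\leq\e^{4/(5\Re w)}$ and the $|w|^{-j}$ decay of $r_0(w/2)^j$, fed into~\eqref{lemma : 6}, that extracts the correct power of $1/n$. Secondary care is needed to keep the factor $\frac{4}{5}<1$ uniform in $j$ in the telescoping step, to verify that the intermediate range where the two branches of~\eqref{lemma : 3} cross contributes only exponentially small terms, and to check that $C$ can be chosen independent of $n$, $\tau$ and $u_0$, the hypothesis $n\geq 3$ ensuring nonnegative exponents throughout.
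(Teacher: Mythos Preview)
Your proposal is correct and follows essentially the same route as the paper: contour integral representation, split at $|w|=1$, telescoping plus Lemma~\ref{lemma:IMP} and~\eqref{lemma : 2},~\eqref{lemma : 3} on the inner part, and the $|r_0(w/2)|\leq C/|w|$ decay combined with~\eqref{lemma : 6} on the outer part. The only cosmetic differences are that the paper uses the variant decomposition $r_0(w/2)^2\bigl[r(w)^{n-2}-\e^{(n-2)w}\bigr]+\bigl[r_0(w/2)^2-\e^{w}\bigr]\e^{(n-2)w}$ on the inner region and absorbs both branches of~\eqref{lemma : 3} into a single bound $|r(w)|\leq\e^{-c/\rho}$ for $\rho\geq 1$ rather than treating the crossover range separately.
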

\begin{proof}
Since $\frac{1}{n}=\frac{\tau}{t_n}$, we need to show that
$$
\|F_n(A)\|\leq \frac{CM}{n^2}.
$$
Let $z=-\rho\e^{\pm i\alpha}$ with $\rho\geq 1$. We have from inequality~\eqref{lemma : 3} that 
$$
|r_1(-\rho\e^{ \pm i \alpha })|\leq \e^{-\frac{c}{\rho}},
$$
where $0<c< 1$ denotes a constant. Additionally, we have
$$
\left|r_0\left(-\frac{\rho\e^{ \pm i \alpha }}{2}\right)\right|=\frac{1}{\sqrt{1+\frac{\rho^2}{4}+\rho\cos(\alpha)}}\leq \frac{2}{\rho}.
$$
We recall that for all $x\in \mathbb{R^+}$, we have
\begin{equation}\label{lemma : 5}
\e^{-x}\leq \frac{C}{x^p},
\end{equation}
where $C$ is independent of $x$.
Therefore, since $|\e^{-(n-1)\rho\e^{  i \alpha }}|\leq\e^{-(n-1)\rho\cos(\alpha)}\leq \frac{C}{(n-1)^2\rho^2}$, we obtain for $n\geq 3$,
\begin{align*}
|F_n(-\rho\e^{\pm  i \alpha })|&\leq \e^{-\frac{c(n-2)}{\rho}}\frac{4}{\rho^2}+ \frac{C}{(n-1)^2\rho^2}\leq \left(\e^{-\frac{c(n-2)}{\rho}}\frac{(n-2)^2}{\rho^2}+ \frac{1}{\rho^2}\right) \frac{C}{n^2}.
\end{align*}
We use the inequality~\eqref{lemma : 6} with $y=c(n-2)$ and obtain
\begin{align*}
\int_1^\infty\e^{-\frac{c(n-2)}{\rho}}\frac{(n-2)^2}{\rho^3}\mathrm{d\rho}&=\frac{1}{c^2}\int_1^\infty\e^{-\frac{c(n-2)}{\rho}}\frac{c^2(n-2)^2}{\rho^3}\mathrm{d\rho}\leq C.
\end{align*}
This allows us to bound the integral,
$$
\int_1^\infty |F_n(-\rho\e^{\pm  i \alpha })|\|R(-\rho\e^{\pm  i \alpha },A)\|\mathrm{d\rho}\leq \frac{C}{n^2}\int_1^\infty\left(\e^{-\frac{c(n-2)}{\rho}}\frac{(n-2)^2}{\rho^2}+ \frac{1}{\rho^2}\right) \frac{M}{\rho}\mathrm{d\rho}\leq \frac{CM}{n^2},
$$ 
using $\int_1^\infty \frac{1}{\rho^3}\mathrm{d\rho}=\frac{1}{2}$ and~\eqref{eq:Sectorial2}.

For $\rho\leq 1$, we write
$$
F_n(z)=r_0\left(\frac{z}{2}\right)^2\left(r(z)^{n-2}-\e^{(n-2)z}\right)+\left(r_0\left(\frac{z}{2}\right)^2-\e^{z}\right)\e^{(n-2)z}.
$$
We observe that
$$
r(z)^{n-2}-\e^{(n-2)z}=\left(r(z)-\e^{z}\right)\sum_{k=0}^{n-3}r(z)^{n-k-3}\e^{kz}.
$$
Since, by estimate~\eqref{lemma : 2}, $|r(-\rho\e^{\pm i \alpha })-\e^{-\rho\e^{ \pm i \alpha }}|\leq C\rho^3$ and since, by the inequality~\eqref{lemma : 3}, $|r(-\rho\e^{ \pm i \alpha })|\leq \e^{-\rho c}$, we obtain
$$
\left|r\left(-\rho\e^{\pm i\alpha}\right)^{n-2}-\e^{-(n-2)\rho\e^{\pm i \alpha }}\right|\leq C\rho^3(n-2)\e^{-\rho (n-3) c}\leq C\rho^3(n-3)^3\e^{-\rho (n-3) c} \frac{C}{n^2}.
$$
Using inequality~\eqref{lemma : 7} with $y=(n-3)c$, we deduce
\begin{align*}
&\int_0^1\rho^2(n-3)^3\e^{-\rho (n-3) c}\mathrm{d\rho}=\frac{1}{c^3}\int_0^1\rho^2(n-3)^3c^3\e^{-\rho (n-3)c}\mathrm{d\rho}\leq C.
\end{align*}
From Lemma~\ref{lemma:IMP}, we also obtain,
$$
\left|\left(r_0\left(\frac{\rho}{2}\e^{\pm i\alpha}\right)^2-\e^{-\rho\e^{\pm  i \alpha }}\right)\e^{-(n-2)\rho\e^{ \pm i \alpha }}\right|\leq C\rho^2\e^{-(n-2)\rho\cos(\alpha)}\leq \frac{C}{n^2}\rho^2(n-2)^2\e^{-(n-2)\rho\cos(\alpha)}.
$$
Using inequality~\eqref{lemma : 7} with $y=(n-2)\cos(\alpha)$, we obtain,
\begin{align*}
&\int_0^1\rho(n-2)^2\e^{-(n-2)\rho\cos(\alpha)}\mathrm{d\rho}=\frac{1}{\cos(\alpha)^2}\int_0^1\rho(n-2)^2\cos(\alpha)^2\e^{-(n-2)\rho\cos(\alpha)}\mathrm{d\rho}\leq C.
\end{align*}
Therefore, since $\left|r_0(\frac{\rho}{2}\e^{\pm i \alpha})\right|\leq 1$,
\begin{align*}
&\int_0^1|F_n(-\rho\e^{\pm  i \alpha })|\|R(-\rho\e^{\pm  i \alpha },A)\|\mathrm{d\rho}\\
&\leq \int_0^1\rho^2(n-3)^3\e^{-\rho (n-3) c}+\rho(n-2)^2\e^{-(n-2)\rho\cos(\alpha)}\mathrm{d\rho}\frac{CM}{n^2} \leq \frac{CM}{n^2}
\end{align*}
This concludes the proof for the first inequality~\eqref{eq : rr02}.
The second inequality~\eqref{eq : rr0} is obtained similarly and thus omitted.
\end{proof}
With the help of Proposition~\ref{prop:compositionCN}, we can now prove Theorem~\ref{thm:CN}.
\begin{proof}[Proof of Theorem~\ref{thm:CN}]
We write
\begin{align*}
&(r(\tau A)^n-\e^{\tau n A})u_0\\
&=(r(\tau A)^{n-1}r_0\left(\frac{\tau}{2}A\right)-\e^{\tau (n-\frac{1}{2})A})(1+\frac{\tau}{2}A)u_0+(1+\frac{\tau}{2}A-\e^{\frac{\tau}{2}A})\e^{\tau(n-\frac{1}{2})}u_0\\
&=(r(\tau A)^{n-2}r_0\left(\frac{\tau}{2}A\right)^2-\e^{\tau (n-1)A})(1+\frac{\tau}{2}A)u_0+(1+\frac{\tau}{2}A-\e^{\frac{\tau}{2}A})\e^{\tau(n-1)}u_0\\
&+\frac{\tau}{2}(r(\tau A)^{n-1}r_0\left(\frac{\tau}{2}A\right)-\e^{\tau (n-\frac{1}{2})A})Au_0+(1+\frac{\tau}{2}A-\e^{\frac{\tau}{2}A})\e^{\tau(n-\frac{1}{2})}u_0.
\end{align*}
From Proposition~\ref{prop:compositionCN}, we have
$$
\|(r(\tau A)^{n-2}r_0\left(\frac{\tau}{2}A\right)^2-\e^{\tau (n-1)A})(1+\frac{\tau}{2}A)u_0\|\leq C\frac{\tau^2}{t_n^2}(\|u_0\|+\tau\|A u_0\|)
$$
and
$$
\|\frac{\tau}{2}(r(\tau A)^{n-1}r_0\left(\frac{\tau}{2}A\right)-\e^{\tau (n-\frac{1}{2})A})Au_0\|\leq C\frac{\tau^2}{t_n}\|Au_0\|.
$$
We observe that 
$$
(1+\frac{\tau}{2}A-\e^{\frac{\tau}{2}A})=-\frac{\tau^2}{4}A^2\varphi_2(\frac{\tau}{2} A),
$$
where $\varphi_2(\tau A)$ is the bounded operator given by
$$
\varphi_2(z)=\int_0^1 \e^{(1-s)z}s\mathrm{ds}.
$$
Using the smoothing property of $\e^{t A}$, we obtain
$$
\|(1+\frac{\tau}{2}A-\e^{\frac{\tau}{2}A})\e^{\tau(n-1)A}u_0\|\leq \frac{\tau^2}{4}\|\varphi_2(\frac{\tau}{2} A)\|\|A\e^{\tau(n-1)A}\|\|Au_0\|\leq  \tau^2\frac{C}{t_{n-1}}\|Au_0\|
$$
and
$$
\|(1+\frac{\tau}{2}A-\e^{\frac{\tau}{2}A})\e^{\tau(n-\frac{1}{2})}u_0\|\leq \frac{\tau^2}{4}\|\varphi_2(\frac{\tau}{2} A)\|\|A\e^{\tau(n-\frac{1}{2})}\|\|Au_0\|\leq  \tau^2\frac{C}{t_{n-\frac{1}{2}}}\|Au_0\|.
$$
This concludes the proof of Theorem~\ref{thm:CN}.
\end{proof}
\section{Runge Kutta methods Butcher tableau}\label{Appendix:B}
We give the Butcher tableau and the stability function of the A-stable implicit Runge-Kutta methods considered in the numerical experiments (see also \cite[Chapter IV.5]{Hai10}). 
\begin{align*}
&\text{The two stage Gauss method (order 4):} & &\\
&\begin{array}{c|cc}
\frac{1}{2}-\frac{\sqrt{3}}{6} &\frac{1}{4}&\frac{1}{4}-\frac{\sqrt{3}}{6}\\[2mm]
\frac{1}{2}+\frac{\sqrt{3}}{6} & \frac{1}{4}+\frac{\sqrt{3}}{6}&\frac{1}{4}\\[2mm]
\hline\\[-3mm]
& \frac{1}{2} & \frac{1}{2}
\end{array} \quad&
R(y)&=\frac{1+\frac{y}{2}+\frac{y^2}{12}}{1-\frac{y}{2}+\frac{y^2}{12}}\\
&\text{The two stage Radau 1a method (order 3):} & & \\
&\begin{array}{c|cc}
0 &\frac{1}{4}&-\frac{1}{4}\\ [2mm]
\frac{2}{3} & \frac{1}{4}&\frac{5}{12}\\[2mm]
\hline\\[-3mm]
& \frac{1}{4} & \frac{3}{4}
\end{array}&
R(y)&=\frac{1+\frac{y}{3}}{1-\frac{2y}{3}+\frac{y^2}{6}}\\
&\text{The two stage Lobatto 3c method (order 2):} & & \\
&\begin{array}{c|cc}
0 &\frac{1}{2}&-\frac{1}{2}\\[2mm]
1 & \frac{1}{2}&\frac{1}{2}\\[2mm]
\hline\\[-3mm]
& \frac{1}{2} & \frac{1}{2}
\end{array} &
R(y)&=\frac{1}{1-y+\frac{y^2}{2}}.
\end{align*}
\bibliographystyle{abbrv}
\bibliography{biblioUA}
\end{document}